\newtheorem{theorem}{Theorem}[section]
\newtheorem{proposition}[theorem]{Proposition}
\newtheorem{lemma}[theorem]{Lemma}
\newtheorem{corollary}[theorem]{Corollary}
\newtheorem{conjecture}[theorem]{Conjecture}
\theoremstyle{definition}
\newtheorem{definition}[theorem]{Definition}
\newtheorem{remark}[theorem]{Remark}
\newtheorem{example}[theorem]{Example}
\theoremstyle{theorem}
\title[Corks, exotic 4-manifolds and knot concordance]{Corks, exotic 4-manifolds and knot concordance}
\author[Kouichi Yasui]{Kouichi Yasui}
\dedicatory{Dedicated to Professor Makoto Sakuma on the occasion of his 60th birthday}
\thanks{The author was partially supported by JSPS KAKENHI Grant Number 16K17593.}
\date{May 24, 2015. \textit{Revised}: September 28, 2017}
\keywords{4-manifolds; smooth structures; Stein 4-manifolds; knot concordance}
\address{Department of Pure and Applied Mathematics, Graduate School of Information Science and Technology, Osaka University, 
1-5 Yamadaoka, Suita, Osaka 565-0871, Japan}
\email{kyasui@ist.osaka-u.ac.jp}
\begin{document}

\begin{abstract}We show that, for each integer $n$, there exist infinitely many pairs of $n$-framed knots representing homeomorphic but non-diffeomorphic (Stein) 4-manifolds, which are the simplest possible exotic 4-manifolds regarding handlebody structures. 
To produce these examples, we introduce a new description of cork twists and utilize satellite maps. As an application, we produce knots with the same $0$-surgery which are not concordant for any orientations, disproving the Akbulut-Kirby conjecture given in 1978.  
\end{abstract}

\maketitle

\section{Introduction}\label{sec:intro}A framed link in $S^3$ gives a 4-manifold by attaching 2-handles to the 4-ball $D^4$ along the framed link. Such framed link presentations of 4-manifolds (together with gauge theoretical results) have many applications to low dimensional topology (cf.\ \cite{A_book, GS, OS1}). 
In this paper, we give a method for producing framed knots representing exotic (i.e.\ pairwise homeomorphic but non-diffeomorphic) 4-manifolds. We note that these are the simplest possible exotic 4-manifolds, regarding the minimal number of handles in their handle decompositions. As an application, we disprove the Akbulut-Kirby conjecture on knot concordance. 

\subsection{Framed knots representing exotic 4-manifolds}We say that a compact oriented smooth 4-manifold is represented by a framed knot, if it is obtained from $D^4$ by attaching a 2-handle along the framed knot. Not much is known about framed knots representing exotic 4-manifolds (exotic  framed knots for short). 
The first example of a pair of exotic framed knots was constructed by Akbulut~\cite{A2} in 1991 (see also \cite{AM}). Kalm\'{a}r and Stipsicz~\cite{KS} extended this example to an infinite family of such pairs. We remark that the framings of these known examples are all $-1$ (and also $+1$ after taking the mirror images of the knots).  Furthermore, one 4-manifold of each pair admits a Stein structure, but the other does not, hence these smooth structures are easily distinguished. 

Here we give infinitely many pairs of exotic $n$-framed knots for any integer $n$, and furthermore the corresponding smooth structures cannot be distinguished by existences of Stein structures. 
\begin{theorem}\label{intro:thm:knot}For each integer $n$, there exist infinitely many distinct pairs of $n$-framed knots such that each pair represents a pair of homeomorphic but non-diffeomorphic 4-manifolds. Furthermore, each pair can be chosen so that the both 4-manifolds admit Stein structures. 
\end{theorem}

We also give other types of exotic framed knots regarding Stein structures. See Corollaries~\ref{cor:Stein_non-Stein} and \ref{both_non-Stein}. For the background of exotic Stein 4-manifolds, we refer to \cite{AY5} and the references therein. 

Our method for producing these examples utilizes satellite maps. Let us recall some definitions. For a knot $P$ in $S^1\times D^2$, let $P(K)$ denote the (untwisted) satellite of a knot $K$ in $S^3$ with the pattern $P$. 
We may regard $P$ as a map from the set of knots in $S^3$ to itself. This map $P$ is called a (untwisted) \textit{satellite map}. For an integer $n$, identifying the tubular neighborhood of $K$ with $S^1\times D^2$ by $n$-framing, we have an \textit{$n$-twisted satellite} $P_n(K)$ of $K$ with the pattern $P$.  

We give pairs of satellite maps such that each pair transforms a knot with a certain simple condition into a pair of exotic framed knots. For the definitions of the symbols, see Section~\ref{sec:preliminary}.

\begin{theorem}\label{intro:thm:satellite_knot}There exists a pair of satellite maps $P$ and $Q$ satisfying the following: for any fixed integer $n$, if a knot $K$ in $S^3$ satisfies $2g_4(K)=\overline{ad}(K)+2$ and $n\leq \widehat{tb}(K)$, then the $n$-twisted satellite knots $P_n(K)$ and $Q_n(K)$ with $n$-framings represent 4-manifolds which are homeomorphic but non-diffeomorphic to each other. Furthermore, both of these 4-manifolds admit Stein structures if $n\leq \overline{tb}(K)-1$. 
\end{theorem}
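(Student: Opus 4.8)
The plan is to build the pair $(P,Q)$ directly from the new cork-twist description developed earlier in the paper: one takes a suitable contractible Stein cork $C$ with boundary involution $\tau$, embeds it in the trace of the $n$-framed $2$-handle attachment, and lets $P$ and $Q$ be the two satellite maps that record the attaching knot before and after the cork twist $\tau$. Concretely, writing $X_P$ (resp.\ $X_Q$) for the $4$-manifold obtained by attaching a single $2$-handle to $D^4$ along $P_n(K)$ (resp.\ $Q_n(K)$) with framing $n$, the description is arranged so that $X_Q$ is obtained from $X_P$ by removing $C$ and regluing it by $\tau$. With this in hand the three assertions---homeomorphism, non-diffeomorphism, and Stein fillability---are treated separately, and the genus hypothesis $2g_4(K)=\overline{ad}(K)+2$ together with the two Thurston--Bennequin-type bounds each play a distinct role.

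For the homeomorphism I would argue as follows. Each of $X_P$ and $X_Q$ is a single $2$-handle attached to $D^4$, hence simply connected with intersection form $\langle n\rangle$; since the cork twist is performed in the interior along a contractible piece it preserves the boundary, so $\partial X_P=\partial X_Q$. The defining property of a cork is precisely that $\tau$ extends to a self-homeomorphism (though not a self-diffeomorphism) of $C$, so regluing by $\tau$ yields a manifold homeomorphic to the original; thus $X_P$ and $X_Q$ are homeomorphic. Alternatively one invokes Boyer's relative version of Freedman's theorem, using that both are simply connected with the same boundary, the same intersection form, and vanishing Kirby--Siebenmann invariant.

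For the Stein structures, assume $n\le\overline{tb}(K)-1$. Put $K$ in Legendrian position realizing the maximal Thurston--Bennequin number $\overline{tb}(K)$, and realize the two patterns as Legendrian knots in $S^1\times D^2$; the resulting Legendrian satellites give representatives of $P_n(K)$ and $Q_n(K)$ whose Thurston--Bennequin numbers are at least $n+1$. Attaching the $2$-handle along such a Legendrian knot with framing one less than its Thurston--Bennequin number yields a Stein domain by the Eliashberg--Gompf criterion, so both $X_P$ and $X_Q$ are Stein in this range. The only care needed is to check that $P$ and $Q$ are chosen so that their patterns admit Legendrian realizations compatible with the framing bookkeeping; this is a direct Legendrian construction rather than a conceptual difficulty.

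The main obstacle is the non-diffeomorphism, and here the genus hypothesis together with $n\le\widehat{tb}(K)$ is essential. The strategy is to obstruct any diffeomorphism $X_P\cong X_Q$ by a gauge-theoretic genus bound. Using a minimal-genus slice surface for $K$ and the structure of the pattern, one produces in one of the two manifolds an embedded surface in the homology class of the $2$-handle whose genus is governed by $g_4(K)$, and the equality $2g_4(K)=\overline{ad}(K)+2$ says exactly that this surface meets the adjunction bound sharply. If the two manifolds were diffeomorphic, transporting this surface through the cork twist would produce a surface of the same genus in the same class in the other manifold, contradicting the adjunction inequality---whose applicability to the class under consideration is what $n\le\widehat{tb}(K)$ guarantees. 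I expect the delicate points to be (i) verifying that the cork twist genuinely changes the minimal genus, i.e.\ that the slicing surface cannot be transported across $\tau$, and (ii) pinning down the precise gauge-theoretic input (a Seiberg--Witten or Heegaard Floer adjunction inequality, possibly after capping off the boundary) that converts the sharpness in $2g_4(K)=\overline{ad}(K)+2$ into a genuine diffeomorphism obstruction. This last step is where the real content of the theorem lies.
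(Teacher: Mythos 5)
Your overall architecture matches the paper's: the pair $(P,Q)$ is indeed manufactured from a cork $(V_m,g_m)$ together with an alternative ``hooked'' description $V_m^*$ of the same contractible manifold, so that the two $2$-handle traces differ by a cork twist; the homeomorphism then follows from Freedman's theorem on extending boundary diffeomorphisms of contractible $4$-manifolds, and the Stein structures come from Legendrian realizations with $tb$ one more than the framing (with the minor correction that one must stabilize down to $tb=n+1$ exactly, not merely have $tb\geq n+1$). These two parts of your proposal are essentially the paper's argument.

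The genuine gap is in the non-diffeomorphism step, which you yourself defer as ``delicate points (i) and (ii).'' The invariant the paper uses is the $n$-shake genus $g_s^{(n)}$ --- the minimal genus of an embedded surface generating $H_2$ of the trace --- which is tautologically a diffeomorphism invariant, so there is no need to ``transport a surface through the cork twist''; what is needed is an actual computation showing the two shake genera differ, and that computation rests on a specific \emph{asymmetry between the two patterns} that your proposal never identifies. In the paper, $Q_{n,m}$ is a band sum of the longitude with an unlinked unknot, so $Q_{n,m}(K)$ is concordant to $K$ and $g_s^{(n)}(Q_{n,m}(K))\leq g_4(Q_{n,m}(K))=g_4(K)$; whereas $P_{n,m}$ contains a clasp whose Legendrian realization raises $tb$ by $2$ while fixing $|r|$, so that after one stabilization one gets a Legendrian representative of $P_{n,m}(K)$ with $tb=n+1$ and adjunction number $\overline{ad}(K)+2$, and the Lisca--Mati\'c/Akbulut--Matveyev adjunction inequality for Stein traces (valid even in genus zero, which matters here) forces $g_s^{(n)}(P_{n,m}(K))\geq g_4(K)+1$; an explicit crossing-change surface gives the matching upper bound. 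The hypothesis $2g_4(K)=\overline{ad}(K)+2$ is what converts the adjunction bound $\overline{ad}(K)+2\leq 2g_s^{(n)}-2$ into the strict inequality $g_s^{(n)}(P_{n,m}(K))>g_4(K)\geq g_s^{(n)}(Q_{n,m}(K))$, and $n\leq\widehat{tb}(K)$ is what lets one realize the framing $n$ while keeping the adjunction number maximal. Without exhibiting patterns with these two properties, the assertion that ``the cork twist genuinely changes the minimal genus'' is precisely the content of the theorem and does not follow from the cork formalism alone --- indeed most cork twists of this kind do \emph{not} change the trace's diffeomorphism type, as the paper's Proposition~\ref{prop:dot-zero} illustrates.
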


In fact, we construct infinitely many such pairs of satellite maps (see Theorem~\ref{thm:satellite:detail}). They are probably mutually distinct pairs, but we do not pursue this point here.  
To construct these satellite maps, we introduce a new description of cork twists, which we call a \textit{hook surgery} (see Sections~\ref{sec:new_description} and \ref{sec:hook_dot}). 
This description immediately gives us a pair of satellite maps such that an exchange of the maps has an effect of a cork twist. Applying the construction of exotic (Stein) 4-manifolds obtained by Akbulut and the author in~\cite{AY2, AY5}, we then obtain the theorem above. 

We remark that, for each integer $n$, there are infinitely many knots $K$ satisfying the assumption of the above theorem (e.g.\ positive torus knots). Furthermore, if a knot $K$ satisfies the assumption, then $P_n(K)$ and $Q_n(K)$ also satisfy the assumption (see Remark~\ref{thm:torus:remark}). Therefore Theorem~\ref{intro:thm:knot} follows from the above theorem.

We further discuss satellite maps from the viewpoint of 4-manifolds. For a (untwisted) satellite map $P$, an integer $n$, and a knot $K$, let $P^{(n)}(K)$ denote the 4-manifold represented by the (untwisted) satellite knot $P(K)$ with $n$-framing. We may regard $P^{(n)}$ as a map from the set of knots in $S^3$ to the set of smooth 4-manifolds. We use the following terminologies. 
\begin{definition}We call the map $P^{(n)}$ an \textit{$n$-framed 4-dimensional satellite map}. We say that two $n$-framed  4-dimensional satellite maps $P^{(n)}$ and $Q^{(n)}$ are smoothly (resp.\ topologically) the same, if  two 4-manifolds $P^{(n)}(K)$ and $Q^{(n)}(K)$ are diffeomorphic (resp.\ homeomorphic) to each other for any knot $K$ in $S^3$. 
\end{definition}

Our satellite maps reveal the following new difference between topological and smooth categories in 4-dimensional topology. 
\begin{corollary}\label{intro:cor:4-dim satellite}For each integer $n$, there exist $n$-framed 4-dimensional satellite maps which are topologically the same but smoothly distinct. 
\end{corollary}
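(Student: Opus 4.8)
The plan is to take for $P$ and $Q$ the pair of satellite maps furnished by Theorem~\ref{intro:thm:satellite_knot} (more precisely, a pair from the infinite family of Theorem~\ref{thm:satellite:detail}), and to deduce the corollary by splitting it into two assertions: that $P^{(n)}$ and $Q^{(n)}$ are \emph{topologically} the same, i.e.\ $P^{(n)}(K)$ and $Q^{(n)}(K)$ are homeomorphic for \emph{every} knot $K$, and that they are \emph{smoothly distinct}, i.e.\ non-diffeomorphic for at least one $K$. Since the corollary concerns the untwisted satellite maps $P^{(n)}$ while Theorem~\ref{intro:thm:satellite_knot} is phrased for the $n$-twisted satellites $P_n(K)$, I would first fix $n$ and replace the patterns $P,Q$ by their images $\tau^n(P), \tau^n(Q)$ under the $n$-fold meridional Dehn twist $\tau$ of $S^1\times D^2$. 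Pushing the twist from the gluing identification onto the pattern gives $(\tau^n P)(K)=P_n(K)$ as knots in $S^3$, so that the untwisted $n$-framed manifold $(\tau^n P)^{(n)}(K)$ coincides with the $n$-framed manifold on $P_n(K)$ appearing in the theorem, and likewise for $Q$; this reduces both assertions to statements about the manifolds of Theorem~\ref{intro:thm:satellite_knot}.

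The smooth distinctness is the easy half. By Remark~\ref{thm:torus:remark}, for the given $n$ there is a knot $K$ (for instance a suitable positive torus knot) satisfying $2g_4(K)=\overline{ad}(K)+2$ and $n\leq\widehat{tb}(K)$; for such a $K$, Theorem~\ref{intro:thm:satellite_knot} asserts that the two $n$-framed satellite manifolds are non-diffeomorphic, so $P^{(n)}$ and $Q^{(n)}$ cannot be smoothly the same.

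The main obstacle is the topological half, because Theorem~\ref{intro:thm:satellite_knot} only produces a homeomorphism for the special knots $K$ satisfying its hypotheses, whereas the corollary demands a homeomorphism for \emph{all} $K$. Here I would exploit the mechanism behind the construction rather than the theorem's conclusion: by the hook surgery description (Sections~\ref{sec:new_description}--\ref{sec:hook_dot}), the exchange $P\leftrightarrow Q$ realizes a single cork twist, so for every $K$ the manifold $Q^{(n)}(K)$ is obtained from $P^{(n)}(K)$ by removing a fixed cork and regluing by its boundary involution. I would then argue that this involution extends to a self-homeomorphism of the contractible cork by Freedman's theorem, whence the cork twist is a homeomorphism of the ambient $4$-manifold. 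The crucial point is that the cork-twist relation, and the resulting homeomorphism, are features of the patterns and the cork alone and are completely insensitive to the gauge-theoretic inequalities on $K$: the hypotheses $2g_4(K)=\overline{ad}(K)+2$ and $n\leq\widehat{tb}(K)$ enter only through the Stein/adjunction obstruction detecting the \emph{non}-diffeomorphism, never through the homeomorphism. Thus the homeomorphism holds for arbitrary $K$, and the step I expect to require the most care is precisely checking that the hook-surgery identification of $P^{(n)}(K)$ with a cork twist of $Q^{(n)}(K)$ is valid uniformly in $K$, independently of whether the two manifolds happen to be diffeomorphic.
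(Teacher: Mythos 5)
Your proposal is correct and follows essentially the same route as the paper: the paper's proof cites Lemma~\ref{proof:lem:cork} (which states the homeomorphism $P_{n,m}^{(n)}(K)\cong_{\mathrm{top}} Q_{n,m}^{(n)}(K)$ for \emph{every} knot $K$, proved exactly by your mechanism of removing $V_m$, regluing $V_m^*$ via $g_n^*$, and invoking Freedman), together with Theorem~\ref{thm:satellite:detail} and Remark~\ref{thm:torus:remark} for the smooth distinctness. The uniformity in $K$ that you flag as the delicate step is supplied by the first bullet of Theorem~\ref{thm:boundary_V_n^*}, which holds for arbitrary $K$.
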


In addition, we also discuss a certain surgery, which we call a \textit{dot-zero surgery}. This surgery is a natural generalization of a cork twist along a Mazur type cork, and many known exotic 4-manifolds are essentially obtained by this surgery (cf.\ \cite{A_book, AY1, GS}). We give a sufficient condition on a link such that a dot-zero surgery induced from the link does not change the smooth structure of a 4-manifold (Proposition~\ref{prop:dot-zero}). Applying this result, we discuss symmetric links and corks. 

Let us recall that a symmetric link $L$ consisting of two unknotted components with the linking number one gives a compact contractible 
4-manifold $X_L$ and an involution $\tau_L$ on the boundary $\partial X_L$, where a symmetric link means a link admitting an involution on $S^3$ exchanging the two components (see Section~\ref{sec:hook_dot} for details). It has been well-known that the pair $(X_L, \tau_L)$ is a cork for many symmetric links $L$ (\cite{AKa12, AM, AY1}), and such a cork is called Mazur type. The only known exception is the Hopf link  to the best of the author's knowledge, and the corresponding contractible 4-manifold is $D^4$. Hence it is natural to ask whether all other symmetric links yield corks. However, applying Proposition~\ref{prop:dot-zero}, we show that there exist infinitely many symmetric links which do not yield corks. 

\begin{theorem}\label{intro:thm:noncork}There exist infinitely many distinct symmetric links $L_1,L_2,\dots$ in $S^3$ consisting of two unknotted components with the linking number one such that any $(X_{L_i}, \tau_{L_i})$ is not a cork. Furthermore each link $L_i$ can be chosen so that $X_{L_i}$ is a Stein 4-manifold not homeomorphic to the 4-ball. 
\end{theorem}

We note that any symmetric link presentation of $D^4$ does not yield a cork, since any self-diffeomorphism of $S^3$ extends to a self-diffeomorphism of $D^4$. In contrast to this fact, we also show that, even if one symmetric link presentation does not yield a cork, another symmetric link presentation of the same (Stein) 4-manifold can yield a cork. 

\begin{theorem}\label{intro:thm:cork_noncork}There exists a compact contractible $($Stein$)$ 4-manifold $C$ admitting two links $L_1$ and $L_2$ in $S^3$ satisfying the following conditions. 
\begin{itemize}
 \item Each $L_i$ is a symmetric link consisting of two unknotted components with the linking number one. 
 \item Both the 4-manifolds $X_{L_1}$ and $X_{L_2}$ are diffeomorphic to $C$. 
 \item $(X_{L_1},\tau_{L_1})$ is not a cork, but $(X_{L_2},\tau_{L_2})$ is a cork. 
\end{itemize}
\end{theorem}
We remark that the above $(X_{L_2},\tau_{L_2})$ is the Mazur cork found by Akbulut~\cite{A1}, and the proof utilizes the new description of cork twists. 

\subsection{Application to knot concordance} 
Let us recall the definition and the background on knot concordance. Two oriented knots $S^1\to S^3$ are said to be concordant if there exists a proper smooth embedding $S^1\times [0,1]\to S^3\times [0,1]$ whose restriction to $S^1\times \{0,1\}$ are the two knots. 
In 1978, Akbulut and Kirby gave the following conjecture (see Problem 1.19 in the Kirby's problem list~\cite{K}). 

\begin{conjecture}[Akbulut and Kirby]\label{conjecture:AK}If $0$-framed surgeries on two knots give the same 3-manifold, then the knots are concordant. 
\end{conjecture}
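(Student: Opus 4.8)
The plan is to disprove Conjecture~\ref{conjecture:AK} by producing, at the framing $n=0$, a pair of knots with diffeomorphic $0$-surgeries that are nevertheless non-concordant, drawing the pair from the satellite maps of Theorem~\ref{intro:thm:satellite_knot}. First I would fix a knot $K$ in $S^3$ satisfying the hypotheses of Theorem~\ref{intro:thm:satellite_knot} for $n=0$, together with the Stein condition $0\leq \overline{tb}(K)-1$. A positive torus knot works (see the remark that such knots satisfy the assumption): for instance the right-handed trefoil realizes $2g_4(K)=\overline{ad}(K)+2$, has $\widehat{tb}(K)\geq 0$, and has $\overline{tb}(K)\geq 1$. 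Applying the pair $P,Q$ then gives two $0$-framed knots $P_0(K)$ and $Q_0(K)$ whose $0$-traces $P^{(0)}(K)$ and $Q^{(0)}(K)$ are homeomorphic but non-diffeomorphic, and in fact Stein. The two things I must then establish are that these knots share a $0$-surgery and that they are not concordant for any choice of orientations.

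The equality of $0$-surgeries I would read off from the hook surgery description of the cork twist. By construction the passage from $P_0(K)$ to $Q_0(K)$ is realized by twisting along a cork embedded in the interior of the $0$-trace, and a cork twist is supported away from the boundary; hence $\partial P^{(0)}(K)$ and $\partial Q^{(0)}(K)$ are the same $3$-manifold. Since the boundary of the $0$-trace of a knot $J$ is exactly the $0$-surgery $S^3_0(J)$, this yields $S^3_0(P_0(K))\cong S^3_0(Q_0(K))$, so the pair is a genuine test case for the conjecture.

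For non-concordance I would argue that a concordance would force a concordance invariant to agree, and then show it does not. The natural invariant is the smooth $4$-genus $g_4$. I would bound $g_4(P_0(K))$ from below by applying the adjunction inequality in the Stein filling $P^{(0)}(K)$ to the generator of $H_2$, using the same extremal data $2g_4(K)=\overline{ad}(K)+2$ that drives Theorem~\ref{intro:thm:satellite_knot}; since the minimal genus of the generator never exceeds $g_4(P_0(K))$, the adjunction bound passes to the knot. I would bound $g_4(Q_0(K))$ from above by exhibiting an explicit low-genus (possibly slice) surface in $D^4$ coming from the pattern $Q$. If these bounds are incompatible, then $g_4(P_0(K))\neq g_4(Q_0(K))$, and because $g_4$ is unchanged by reversing the orientation of a knot, this rules out concordance for every choice of orientations on the two knots at once.

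I expect the genuine difficulty to lie in this last step. The equal-surgery claim is essentially formal once the interior support of the cork twist is in hand, and the lower bound for $P_0(K)$ is clean because the adjunction inequality controls the minimal genus of the $H_2$ generator and that quantity does not exceed $g_4$. The delicate part is the upper bound for $g_4(Q_0(K))$: I must produce an honest slice surface for $Q_0(K)$—one arising from a surface meeting the $2$-handle of $Q^{(0)}(K)$ geometrically once—of genus strictly below the adjunction bound, and I must do so uniformly in the companion $K$ through the satellite operation. Arranging that the two bounds are actually \emph{separated}, rather than merely that the two $4$-manifolds are distinct, is where the design of the pattern $Q$ must do the real work.
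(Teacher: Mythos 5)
Your proposal follows the paper's argument essentially verbatim: the paper disproves the conjecture via Theorem~\ref{thm:satellite:detail}, getting equality of the $0$-surgeries from the hook-surgery/cork-twist description (which only removes and reglues a contractible piece in the interior of the $0$-trace, so the boundary is untouched) and non-concordance for all orientations by separating the $4$-genera, with the lower bound $g_4(P_{n,m}(K))\geq g_4(K)+1$ coming from the adjunction inequality applied through the shake genus exactly as you describe. The step you flag as the real difficulty---the upper bound on $g_4(Q_{n,m}(K))$---is dispatched in the paper by a simple observation rather than a constructed slice surface: the pattern $Q_{n,m}$ is a band sum of the longitude of the solid torus with an unlinked unknot, so $Q_{n,m}(K)$ is concordant to $K$ itself (Remark~\ref{thm:torus:remark}), giving $g_4(Q_{n,m}(K))=g_4(K)$ and hence the required separation immediately.
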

To be precise, this conjecture is stated as follows. Note that there are oriented knots which are not concordant to the same knots with the reverse orientation (e.g.\ \cite{KL}). 

\begin{conjecture}[cf.\ \cite{AT}]\label{conjecture:AT}If $0$-framed surgeries on two knots give the same 3-manifold, then the knots with relevant orientations are concordant. 
\end{conjecture}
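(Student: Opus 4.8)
The goal, as announced in the abstract, is not to prove Conjecture~\ref{conjecture:AT} but to disprove it, so the plan is to produce an explicit counterexample: a pair of knots with identical $0$-surgeries that fail to be concordant under every choice of orientations. First I would specialize Theorem~\ref{intro:thm:satellite_knot} (and its detailed version, Theorem~\ref{thm:satellite:detail}) to the framing $n=0$. Choosing a knot $K$ satisfying $2g_4(K)=\overline{ad}(K)+2$ and $0\le\widehat{tb}(K)$ --- a positive torus knot will do, by Remark~\ref{thm:torus:remark} --- the theorem hands me two knots $J_0:=P_0(K)$ and $J_1:=Q_0(K)$ whose $0$-traces $W_0$ and $W_1$ (the $0$-framed $2$-handlebodies they bound) are homeomorphic but not diffeomorphic. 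These traces are my candidate invariants for detecting non-concordance.

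The second and decisive structural input is that $P$ and $Q$ were built from the hook-surgery description of a single cork twist, so passing from $J_0$ to $J_1$ corresponds to twisting along an embedded contractible piece inside $W_0$. Since a cork twist is supported away from the boundary, it leaves $\partial W_0=S^3_0(J_0)$ unchanged; hence $S^3_0(J_0)$ and $S^3_0(J_1)$ are the same $3$-manifold, with the identification carrying meridian to meridian. This is exactly the hypothesis of Conjecture~\ref{conjecture:AT}, and it simultaneously explains why no $3$-dimensional or classical concordance invariant can separate $J_0$ and $J_1$: everything visible to the $0$-surgery agrees, so the obstruction must be genuinely $4$-dimensional and smooth.

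The heart of the argument is then a bridge lemma: if two knots have the same $0$-surgery (via the meridian-preserving identification above) and are smoothly concordant, then their $0$-traces are diffeomorphic rel boundary. I would prove it by using the concordance annulus $A\subset S^3\times[0,1]$ to build the standard homology cobordism $V$ between $S^3_0(J_0)$ and $S^3_0(J_1)$ obtained by $0$-surgery along $A$, and then gluing $V$ against the meridian-preserving identification to convert $W_0$ into $W_1$; the genus-$0$ nature of $A$ is what should make this regluing smoothly trivial rather than merely a homology equivalence. Granting the lemma, concordance of $J_0$ and $J_1$ would force $W_0\cong W_1$, contradicting Theorem~\ref{intro:thm:satellite_knot}, so $J_0$ and $J_1$ are not concordant. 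This bridge lemma is the step I expect to be the main obstacle, since a general homology cobordism of $0$-surgeries need not be a product, and the non-diffeomorphism of $W_0$ and $W_1$ is an unstable phenomenon (detected through the Stein structures and the adjunction inequality encoded in $\overline{ad}$, $\widehat{tb}$, $\overline{tb}$) that survives no stabilization; I must therefore extract genuinely unstable information from the concordance, presumably by running the adjunction obstruction directly across the genus-$0$ cobordism $V$ rather than appealing to a naive product structure.

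Finally, Conjecture~\ref{conjecture:AT} only asks for concordance after choosing relevant orientations, so I must rule out every orientation. Here I would observe that reversing the orientation of either $J_0$ or $J_1$ changes neither the $0$-trace up to diffeomorphism nor the invariants $g_4$, $\overline{ad}$, $\widehat{tb}$, $\overline{tb}$ that witness the non-diffeomorphism of $W_0$ and $W_1$, while the string orientation internal to the satellite pattern is controlled by the symmetry of the hook-surgery construction. Thus the same bridge lemma applies verbatim to each orientation, and $J_0$ and $J_1$ remain non-concordant for all of them, disproving Conjecture~\ref{conjecture:AT} and hence the original Conjecture~\ref{conjecture:AK}.
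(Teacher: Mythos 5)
Your overall plan (disprove the conjecture by exhibiting $P_0(K)$ and $Q_0(K)$ with the same $0$-surgery) is the right one, and your explanation of why the $0$-surgeries agree --- the two traces differ by a surgery supported in the interior --- matches Lemma~\ref{proof:lem:cork}. But the step you yourself flag as the main obstacle, the ``bridge lemma'' asserting that concordant knots with the same $0$-surgery have diffeomorphic $0$-traces, is a genuine gap, and it is not how the paper argues. A concordance annulus only produces a homology cobordism $V$ between the $0$-surgeries; $W_0\cup V$ need not even have the homotopy type of $W_1$ rel boundary (the concordance exterior can have complicated $\pi_1$), and even granting a homeomorphism, upgrading it to a diffeomorphism is precisely the kind of unstable smooth statement that cannot be had for free --- indeed the whole paper is about traces that are homeomorphic but not diffeomorphic. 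Your proposed fix (``run the adjunction obstruction across $V$'') is not carried out and is not obviously possible.

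The paper sidesteps all of this with a much more elementary obstruction: the smooth $4$-genus. In Theorem~\ref{thm:satellite:detail} one computes $g_4(P_{n,m}(K))=g_4(K)+1$ and $g_4(Q_{n,m}(K))=g_4(K)$. The lower bound for $P_{n,m}(K)$ comes from the adjunction inequality applied to a stabilized Legendrian representative with $tb=n+1$ and $|r|=|r(\mathcal{K})|+1$ (giving $g_s^{(n)}(P_{n,m}(K))\geq g_4(K)+1$, hence the same bound for $g_4$), and the matching upper bound from a single crossing change turning $P_{n,m}(K)$ into $K$; the value for $Q_{n,m}(K)$ follows because $Q_{n,m}(K)$ is visibly concordant to $K$ (Remark~\ref{thm:torus:remark}(3)). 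Since the $4$-genus is a concordance invariant and is independent of orientation, the two knots are not concordant for any orientations --- no statement about diffeomorphism of traces is needed for the concordance conclusion. (The non-diffeomorphism of the traces is then a separate, bonus conclusion, detected by the $n$-shake genus via the same adjunction computation.) You should replace the bridge lemma by this direct $4$-genus comparison.
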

We note that many concordance invariants are defined via the $0$-surgery of a knot (often with the positive meridian), and two knots with the same $0$-surgery have the same concordance invariants for relevant orientations (see Problem 1.19 in \cite{K} and introduction in \cite{CP}). Furthermore, if every homotopy 4-sphere is diffeomorphic to the standard one, then this conjecture is true when one knot is slice (see \cite{K}). 
On the other hand, Cochran, Franklin, Hedden, and Horn \cite{CFHH} produced non-concordant pairs of knots under the weaker assumption that $0$-framed surgeries on knots are homology cobordant. For related results in the link case, we refer to \cite{CP} and the references therein. Recently Abe and Tagami~\cite{AT} proved that, if the slice-ribbon conjecture is true, then the Akbulut-Kirby conjecture is false. 

In this paper we disprove the Akbulut-Kirby conjecture. In fact, we show that exotic $0$-framed knots given by Theorem~\ref{intro:thm:satellite_knot} are counterexamples. 

\begin{theorem}\label{thm:intro}There exists a pair of knots in $S^3$ with the same $0$-surgery which are not concordant for any orientations. Furthermore, there exist infinitely many distinct pairs of such knots. 
\end{theorem}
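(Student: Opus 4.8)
The plan is to specialize Theorem~\ref{intro:thm:satellite_knot} to the case $n=0$ and to read off the two $0$-framed knots it produces as the desired non-concordant pair. First I would fix a knot $K$ satisfying the hypotheses $2g_4(K)=\overline{ad}(K)+2$ and $0\le\widehat{tb}(K)$; by Remark~\ref{thm:torus:remark} a positive torus knot works, and one can simultaneously arrange $0\le\overline{tb}(K)-1$ so that the resulting 4-manifolds are Stein. Setting $J_0=P_0(K)$ and $J_1=Q_0(K)$, let $W_i$ be the 4-manifold obtained by attaching a $0$-framed 2-handle to $D^4$ along $J_i$. Theorem~\ref{intro:thm:satellite_knot} then guarantees that $W_0$ and $W_1$ are homeomorphic but not diffeomorphic.

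Next I would check that $J_0$ and $J_1$ have the same $0$-surgery. This is where the hook-surgery description is used: $W_1$ is obtained from $W_0$ by a cork twist, that is, by removing an embedded cork $C\subset\operatorname{int}W_0$ and regluing it via the boundary involution $\tau$. Because this modification is supported in the interior, it fixes the boundary pointwise, so $S^3_0(J_0)=\partial W_0=\partial W_1=S^3_0(J_1)$. Hence the two knots share a $0$-surgery, which is precisely the hypothesis of the Akbulut--Kirby conjecture.

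The heart of the argument, and the step I expect to be the main obstacle, is to convert the non-diffeomorphism of $W_0$ and $W_1$ into non-concordance of $J_0$ and $J_1$. Suppose for contradiction that they were concordant. Surgering the concordance annulus $A\subset S^3\times[0,1]$ along its $0$-framing produces a smooth homology cobordism $V$ from $\partial W_0$ to $\partial W_1$ (this is the construction underlying the homology-cobordant-surgery results of \cite{CFHH}), and a handle analysis should identify $W_0\cup_{\partial}V$ with $W_1$. It then remains to verify that gluing the homology cobordism $V$ onto $W_0$ leaves unchanged the smooth obstruction used in \cite{AY2, AY5} to distinguish $W_0$ from $W_1$; granting this, $W_0$ and $W_1$ would be forced to agree on that invariant, contradicting Theorem~\ref{intro:thm:satellite_knot}. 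Pinning down exactly which gauge-theoretic/Stein obstruction survives the homology cobordism, and exploiting that the two $0$-surgeries are genuinely equal rather than merely homology cobordant, is the delicate point of the whole proof.

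Finally I would dispose of the orientation and multiplicity clauses. Reversing the string orientation of a knot changes neither its $0$-surgery nor the 4-manifold $W_i$ it frames, so the single obstruction above rules out concordance for every choice of orientations at once, giving one pair with the same $0$-surgery that is non-concordant for any orientations. To obtain infinitely many distinct such pairs, I would let $K$ range over infinitely many positive torus knots satisfying the hypotheses, as permitted by Corollary~\ref{intro:cor:knot}; the common $0$-surgeries $S^3_0(P_0(K))$ can then be seen to be pairwise non-homeomorphic (for instance because the underlying knots have distinct genera or Alexander polynomials), so the resulting counterexample pairs are pairwise distinct.
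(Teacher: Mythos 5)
Your reduction to Theorem~\ref{intro:thm:satellite_knot} for $n=0$, and your observation that the hook/cork surgery is supported in the interior of the $0$-trace so that $\partial W_0=\partial W_1$ gives the common $0$-surgery, both match the paper. But the heart of your argument --- converting the non-diffeomorphism of $W_0$ and $W_1$ into non-concordance of $J_0$ and $J_1$ by surgering the concordance annulus to a homology cobordism $V$ and ``identifying $W_0\cup_\partial V$ with $W_1$'' --- is a genuine gap, and you flag it yourself as the delicate point without closing it. A concordance between two knots does not in general produce a diffeomorphism of their $0$-traces, nor an identification $W_0\cup_\partial V\cong W_1$; and the invariant that actually distinguishes $W_0$ from $W_1$ in this paper is the $0$-shake genus, which is a diffeomorphism invariant of the trace but is \emph{not} known to be a concordance invariant of the knot (this is exactly the subtlety of shake concordance versus concordance in \cite{CR}). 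So the implication ``traces non-diffeomorphic $\Rightarrow$ knots non-concordant'' does not follow from the tools you invoke, and no specific gauge-theoretic obstruction with both required properties is identified.

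The paper's route is different and much more elementary at this step: by Theorem~\ref{thm:satellite:detail} one computes $g_4(P_{n,m}(K))=g_4(K)+1$ (lower bound from the adjunction inequality applied to an explicit Legendrian representative, upper bound from a single crossing change) and $g_4(Q_{n,m}(K))=g_4(K)$ (because $Q_{n,m}(K)$ is concordant to $K$, Remark~\ref{thm:torus:remark}(3)). Since concordant knots have equal $4$-genus (Lemma~2.1) and the $4$-genus is insensitive to orientation, the two knots are non-concordant for any orientations; the chain $g_s^{(0)}(P)=g_4(P)>g_4(Q)\geq g_s^{(0)}(Q)$ simultaneously distinguishes the smooth structures on the traces. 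In other words, the $4$-genus computation is the source of both conclusions, rather than the non-diffeomorphism being the source of the non-concordance. Your handling of the orientation clause and of the infinitude clause (varying or iterating $K$; the paper instead iterates and separates the pairs by their $4$-genera) would be fine once the central obstruction is replaced by the $4$-genus argument.
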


It is natural to ask whether a knot concordance invariant provides an invariant of 3-manifolds given by $0$-surgeries of knots. 
Cochran, Franklin, Hedden, and Horn \cite{CFHH} proved that the invariants $\tau$, $s$ (\cite{OzSz, Ras}) and the 4-genus are not invariants of homology cobordism classes of such 3-manifolds. Furthermore, a result of Levine \cite{Lev} shows that the same holds for the invaiant $\epsilon$ (\cite{Hom}). Strengthening these results, our examples show 
\begin{corollary}\label{intro:cor:tau}The knot concordance invariants $\tau$, $s$, $\epsilon$ and the 4-genus are not invariants of 3-manifolds given by $0$-surgeries of knots. 
\end{corollary}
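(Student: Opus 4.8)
The plan is to read the corollary off the explicit pair of knots produced for Theorem~\ref{thm:intro}. Fix $n=0$ and a knot $K$ satisfying the hypotheses of Theorem~\ref{intro:thm:satellite_knot}, for instance a positive torus knot (which qualifies by Remark~\ref{thm:torus:remark}), and set $J_P=P_0(K)$ and $J_Q=Q_0(K)$. By construction these two knots have the same $0$-surgery, so by Gabai's theorem (the Thurston norm of the generator of $H_2$ of the surgered manifold is $2g-1$) they have the same Seifert genus $g$. The entire argument then reduces to showing that $\tau$ (resp.\ $s$) takes \emph{different} values on $J_P$ and $J_Q$: a knot concordance invariant that separates two knots with a common $0$-surgery cannot be an invariant of the surgered $3$-manifold, which is exactly the assertion.

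First I would bound the invariants from below using the Stein structures. Under the hypothesis $n\le\overline{tb}(K)-1$ (here $\overline{tb}(K)\ge 1$) the $0$-framed traces of $J_P$ and $J_Q$ are Stein, and each knot admits a Legendrian representative $L$ with $tb(L)=1$ realizing the $0$-framing. The slice--Bennequin inequalities for $\tau$ and $s$ give $tb(L)+|\mathrm{rot}(L)|+1\le 2\tau(\,\cdot\,)$ and $\le s(\,\cdot\,)$, while the slice-genus bounds give $2\tau(\,\cdot\,)\le 2g_4(\,\cdot\,)\le 2g$ and $s(\,\cdot\,)\le 2g_4(\,\cdot\,)\le 2g$. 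For the knot whose Legendrian realization at $tb=1$ attains the maximal rotation number $|\mathrm{rot}(L)|=2g-2$ (obtained by positive stabilizations of a maximal-$tb$ representative), all these inequalities collapse to equalities, forcing $g_4=g$, $\tau=g$ and $s=2g$ for that knot.

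The decisive step is to show that the other knot is \emph{not} sharp, i.e.\ that its slice genus, and hence its $\tau$ and $s$, is strictly smaller; note that the common Seifert genus gives the same upper bound $g$ for both, so the gap must come entirely from one side failing to be sharp. This is where the detailed structure of the two satellite patterns and the underlying cork enters: because exchanging $P$ and $Q$ realizes the cork twist, the two Stein traces are exotic, and the explicit slicing surface coming from the hook-surgery description should exhibit this knot as bounding a surface of genus $<g$ in a suitable $4$-manifold, yielding $g_4<g$ and therefore $\tau<g$, $s<2g$. I expect this to be the main obstacle, since it demands an actual computation of the concordance invariants of a satellite knot rather than a formal inequality; the satellite $\tau$-formulas and the cork-twist picture developed earlier in the paper are the tools I would use to carry it out.

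Finally, since $\tau$ and $s$ are fixed under orientation reversal and only change sign under mirroring, the strict inequalities $\tau(J_P)\ne\tau(J_Q)$ and $s(J_P)\ne s(J_Q)$ persist for every choice of orientations; this simultaneously reproves the non-concordance of Theorem~\ref{thm:intro} and establishes that $\tau$ and $s$ fail to be invariants of the common $0$-surgery. The infinitely many distinct pairs of Theorem~\ref{thm:intro} then yield infinitely many witnesses, completing the corollary.
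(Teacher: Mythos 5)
Your overall strategy --- exhibit the pair $P_{0,m}(K)$, $Q_{0,m}(K)$ with common $0$-surgery and show that $\tau$ and $s$ take different values on the two knots --- is exactly the paper's (Remark~\ref{cor:tau;shake2}). But the way you propose to separate the invariants has a genuine gap, and you flag it yourself when you defer the ``decisive step.'' You route the argument through the common Seifert genus $g$ supplied by Gabai's theorem and assert that for one of the two knots the slice--Bennequin inequalities ``collapse to equalities, forcing $g_4=g$, $\tau=g$ and $s=2g$.'' Nothing in the construction produces a Legendrian representative with $tb+|r|=2g-1$ for the \emph{Seifert} genus $g$: the relevant upper bound is $tb+|r|\le 2g_4-1$, so such a representative can exist only if the knot already satisfies $g_4=g_3$ and attains the bound, neither of which is established. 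Indeed the framework is delicate here: since the two knots share a $0$-surgery, $g_3(Q_{n,m}(K))=g_3(P_{n,m}(K))\ge g_4(P_{n,m}(K))=g_4(K)+1>g_4(K)=g_4(Q_{n,m}(K))$, so $Q_{n,m}(K)$ certainly fails $g_4=g_3$, and whether $P_{n,m}(K)$ satisfies it is simply unknown. The Seifert genus plays no useful role and cannot carry the argument.

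The paper closes the gap without mentioning the Seifert genus. Both satellite knots are sharp for the \emph{adjunction-number} bounds $\overline{ad}(L)\le 2\tau(L)-2\le 2g_4(L)-2$ and $\overline{ad}(L)\le s(L)-2\le 2g_4(L)-2$, but their four-genera differ by one: the explicit Legendrian representative of $P_{n,m}(K)$ in Figure~\ref{fig:Legendrian_P_nm} has $ad=\overline{ad}(K)+2=2g_4(K)$, forcing $2\tau(P_{n,m}(K))\ge 2g_4(K)+2$, while a single crossing change exhibits a genus-$(g_4(K)+1)$ slice surface giving the matching upper bound; and $Q_{n,m}(K)$ is concordant to $K$ (Remark~\ref{thm:torus:remark}), so $\tau(Q_{n,m}(K))\le g_4(Q_{n,m}(K))=g_4(K)$. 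Hence $\tau(P_{n,m}(K))=g_4(K)+1\ne g_4(K)=\tau(Q_{n,m}(K))$, and likewise $s$ differs by $2$ (Remarks~\ref{rem:knots} and \ref{cor:tau;shake2}). If you replace your Seifert-genus step with this pinching of the $\overline{ad}$ inequalities, your argument goes through and coincides with the paper's.
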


\section{Preliminaries and notations}\label{sec:preliminary}In this section, we recall basic definitions and facts and introduce our notations. Our main tools are 4-dimensional (Stein) handlebodies and Legendrian knots. For their details, we refer to \cite{A_book, GS, OS1}. 
\subsection{Knots}
Let $K$ be a knot in $S^3$. The \textit{4-genus} $g_4(K)$ is defined to be the minimal genus of a properly embedded smooth surface in $D^4$ bounded by the knot $K$.  For an integer $n$, the 4-manifold represented by $K$ with $n$-framing means the 4-manifold obtained from $D^4$ by attaching a 2-handle along $K$ with $n$-framing. The \textit{$n$-shake genus} $g_s^{(n)}(K)$ is defined to be the minimal genus of a smoothly embedded closed surface representing a generator of the second homology group of this 4-manifold (\cite{A0, A_book}). 
Note the obvious relation $g_s^{(n)}(K)\leq g_4(K)$. The lemma below is well-known and obvious from the definition. 

\begin{lemma}Two concordant knots in $S^3$ have the same 4-genus. 
\end{lemma}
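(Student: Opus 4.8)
The plan is to prove the two inequalities $g_4(K_0) \le g_4(K_1)$ and $g_4(K_1) \le g_4(K_0)$ separately, where $K_0, K_1$ are the two concordant knots; since concordance is a symmetric relation, it will suffice to establish one of them, say $g_4(K_1) \le g_4(K_0)$, and then invoke symmetry.

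First I would fix the data. By hypothesis there is a proper smooth embedding of the annulus $C = S^1 \times [0,1]$ into $S^3 \times [0,1]$ meeting $S^3 \times \{0\}$ in $K_0$ and $S^3 \times \{1\}$ in $K_1$. Next I would choose a minimal-genus slice surface for $K_0$, i.e.\ a properly embedded smooth surface $\Sigma \subset D^4$ with $\partial \Sigma = K_0 \subset \partial D^4 = S^3$ and $\mathrm{genus}(\Sigma) = g_4(K_0)$.

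The key step is a gluing. Viewing the collar $S^3 \times [0,1]$ as attached to $\partial D^4$ along $S^3 \times \{0\}$, the union $W = D^4 \cup_{S^3} (S^3 \times [0,1])$ is again diffeomorphic to $D^4$, with $\partial W = S^3 \times \{1\}$. Inside $W$ I would form $\Sigma' = \Sigma \cup_{K_0} C$, smoothing along the gluing circle $K_0$. Then $\Sigma'$ is a properly embedded smooth surface in $W \cong D^4$ with $\partial \Sigma' = K_1$. Because attaching an annulus to $\Sigma$ along a single boundary circle changes neither the Euler characteristic nor the number of boundary components, a short bookkeeping of $\chi$ gives $\mathrm{genus}(\Sigma') = \mathrm{genus}(\Sigma) = g_4(K_0)$, whence $g_4(K_1) \le g_4(K_0)$.

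Finally, running the same argument with the concordance read in the reverse direction — which exhibits the same pair with the roles of $K_0$ and $K_1$ interchanged — yields $g_4(K_0) \le g_4(K_1)$, and the two inequalities give the desired equality. The only point needing a little care, and the nearest thing to an obstacle in an otherwise entirely routine argument, is the smoothing of $\Sigma'$ along $K_0$ together with the verification that the annulus contributes no genus; both are standard.
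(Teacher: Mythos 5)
Your argument is correct and is precisely the standard one the paper has in mind; the paper itself gives no proof, dismissing the lemma as ``well-known and obvious from the definition.'' Gluing the concordance annulus to a minimal-genus surface for $K_0$ inside $D^4 \cup_{S^3}(S^3\times[0,1]) \cong D^4$, noting the annulus contributes no genus, and invoking symmetry of concordance is exactly the intended routine verification.
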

For an integer $n$, let $f_n: S^1\times D^2\to N(K)\subset S^3$ be a trivialization of the tubular neighborhood $N(K)$ of a knot $K$ corresponding to the $n$-framing of $K$. 
For a knot $P$ in the solid torus $S^1\times D^2$, we denote the knot $f_0(P)$ in $S^3$ by $P(K)$. The knot $P(K)$ is called the \textit{satellite} of $K$ with the pattern $P$, and the knot $f_n(K)$ in $S^3$ is  called an \textit{$n$-twisted satellite} of $K$ with the pattern $P$. The map on the set of knots in $S^3$ given by $K\mapsto P(K)$ is called a (untwisted) \textit{satellite map}. 

\subsection{Legendrian knots}Throughout this paper, a Legendrian knot in $S^3$ means the one with respect to the standard tight contact structure on $S^3$. 
For a Legendrian knot $\mathcal{K}$ in $S^3$, let $tb(\mathcal{K})$ and $r(\mathcal{K})$ denote the Thurston-Bennequin number and the rotation number, respectively. According to \cite{E1, G1}, the 4-manifold represented by $\mathcal{K}$ with the framing $tb(\mathcal{K})-1$ admits a Stein structure. Since any Stein 4-manifold can be embedded into a closed minimal complex surface of general type with $b_2^+>1$ (\cite{LM1}), the well-known adjunction inequality for this closed 4-manifold (\cite{FS3, KM1,MST, OzSz_ad}) together with Gompf's Chern class formula (\cite{G1}) gives the following version for the Stein 4-manifold. Note that this holds even for the genus zero case (cf.\ \cite{GS, OS1, AY5}), unlike the version for general closed 4-manifolds. 
\begin{theorem}[\cite{AM, LM2}] $tb(\mathcal{K})-1+|r(\mathcal{K})|\leq 2g_s^{(tb(\mathcal{K})-1)}(\mathcal{K})-2$.
\end{theorem}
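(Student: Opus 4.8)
The plan is to recognize the inequality as the ordinary adjunction inequality for a closed 4-manifold, transported across the Stein structure. Write $n=tb(\mathcal{K})-1$ and let $W$ be the 4-manifold obtained by attaching one 2-handle to $D^4$ along $\mathcal{K}$ with framing $n$; by \cite{E1, G1} it carries a Stein structure with almost complex structure $J$. Here $H_2(W)\cong\mathbb{Z}$ is generated by the class $h$ obtained by capping a Seifert surface of $\mathcal{K}$ off with the core of the 2-handle, and $h\cdot h=n$. First I would invoke Gompf's Chern class formula \cite{G1}, which gives $\langle c_1(W,J),h\rangle=r(\mathcal{K})$. If $\Sigma\subset W$ is a genus-minimizing embedded closed surface representing a generator of $H_2(W)$, so that $g(\Sigma)=g_s^{(n)}(\mathcal{K})$, then $[\Sigma]^2=n$ and $|\langle c_1(W,J),[\Sigma]\rangle|=|r(\mathcal{K})|$, and the assertion becomes exactly the adjunction inequality
\[
[\Sigma]^2+\bigl|\langle c_1(W,J),[\Sigma]\rangle\bigr|\le 2g(\Sigma)-2 .
\]

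Next I would import this inequality from the closed setting using the Lisca--Matic embedding. By \cite{LM1} the Stein domain $W$ embeds holomorphically in a closed minimal complex surface $X$ of general type with $b_2^+(X)>1$, and holomorphicity gives $c_1(X)|_W=c_1(W,J)$; for such $X$ the canonical class $K_X=-c_1(X)$ is a Seiberg--Witten basic class. Regarding $\Sigma$ as a surface in $X$ leaves $[\Sigma]^2=n$ and $|\langle K_X,[\Sigma]\rangle|=|r(\mathcal{K})|$ unchanged, so applying the closed adjunction inequality for $X$ \cite{FS3, KM1, MST, OzSz_ad} to $\Sigma$ with the basic class $K_X$ yields $2g(\Sigma)-2\ge [\Sigma]^2+|\langle K_X,[\Sigma]\rangle|=n+|r(\mathcal{K})|$. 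This settles every case in which $g(\Sigma)\ge 1$ and $[\Sigma]^2=n\ge 0$.

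The hard part, as the remark in the statement signals, is the genus-zero case, where the closed adjunction inequality is unavailable: in an arbitrary closed 4-manifold a sphere of non-negative square would formally violate it. To treat $g(\Sigma)=0$ with $n\ge 1$ I would tube together $d$ parallel copies of the sphere $\Sigma$ inside $X$, resolving all $\binom{d}{2}n$ intersection points, to obtain a connected embedded surface $\Sigma_d$ representing $dh$ with $g(\Sigma_d)=\binom{d}{2}n-(d-1)$, $[\Sigma_d]^2=d^2n$, and $\langle K_X,[\Sigma_d]\rangle=-d\,r(\mathcal{K})$. For $d$ large one has $g(\Sigma_d)\ge 1$ and $[\Sigma_d]^2\ge 0$, so the adjunction inequality applies and, after expanding and dividing by $d$, collapses to $n+|r(\mathcal{K})|\le -2=2g(\Sigma)-2$. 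This is precisely where the minimality and general type of $X$ enter ($X$ carries no essential sphere of non-negative square), and it is the feature absent for general closed 4-manifolds. The residual bookkeeping, namely the boundary value $n=0$ and the sign conventions when $[\Sigma]^2<0$, I would handle as in \cite{AM, LM2}.
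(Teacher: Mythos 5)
Your proposal is correct and follows exactly the route the paper itself indicates (the paper gives no proof beyond a one-sentence sketch, attributing the result to \cite{AM, LM2}): Stein structure on the $(tb-1)$-framed handlebody via \cite{E1, G1}, Gompf's rotation-number formula for $c_1$, the Lisca--Mati\'c embedding into a minimal surface of general type with $b_2^+>1$, and the closed adjunction inequality, with the genus-zero case handled by the standard tubing-of-parallel-copies trick. The only pieces you defer ($[\Sigma]^2\le 0$ in the genus-zero case) are precisely what the cited references \cite{AM, LM2} supply, so this matches the paper's intended argument.
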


We define the \textit{adjunction number} $ad(\mathcal{K})$ as the left side of this inequality, namely, 
\begin{equation*}
ad(\mathcal{K})=tb(\mathcal{K})-1+|r(\mathcal{K})|. 
\end{equation*}
For a knot $K$ in $S^3$, we say that a Legendrian knot $\mathcal{K}$ is a Legendrian representative of $K$ if $\mathcal{K}$ is smoothly isotopic to $K$. We denote the set of Legendrian representatives of $K$ by $\mathcal{L}(K)$. We define the \textit{maximal Thurston-Bennequin number} $\overline{tb}(K)$ and the \textit{maximal adjunction number} $\overline{ad}(K)$ of $K$ by  
\begin{equation*}
 \overline{tb}(K)= \max\{tb(\mathcal{K})\mid \mathcal{K}\in \mathcal{L}(K)\} \quad\text{and}\quad 
\overline{ad}(K)= \max\{ad(\mathcal{K})\mid \mathcal{K}\in \mathcal{L}(K)\}.
\end{equation*}
We also define the symbol $\widehat{tb}(K)$ by 
\begin{equation*}
\widehat{tb}(K)=\max\{tb(\mathcal{K})\mid \mathcal{K}\in \mathcal{L}(K),\; ad(\mathcal{K})=\overline{ad}({K})\}.
\end{equation*}
\subsection{Corks}For a compact contractible oriented smooth 4-manifold $C$ and an involution $\tau:\partial C \to \partial C$, the pair $(C,\tau)$ is called a \textit{cork}, if $\tau$ extends to a self-homeomorphism of $C$ but does not extend to any self-diffeomorphism of $C$.  For a smooth 4-manifold $X$ containing $C$ as a submanifold, remove $C$ from $X$ and glue it via the involution $\tau$. We call this operation a \textit{cork twist} along $(C,\tau)$. We note that any self-diffeomorphism on the boundary of a smooth contractible 4-manifold extends to a self-homeomorphism of the 4-manifold (\cite{Fr}). For examples of corks, see \cite{AY1}. Note that, unlike the original definition in \cite{AY1}, we require a cork to be contractible, since this case seems most useful for applications. Also we do not require a cork to admit a Stein structure, since we regard a cork as a useful tool for various constructions, and thus the flexible definition seems more appropriate. However, all corks used in this paper actually admit Stein structures.

\section{A new description of cork twists}\label{sec:new_description}
In this section, we give a family of corks and introduce a new description of cork twists, which we call a hook surgery in Section~\ref{sec:hook_dot}. The new description is a key of our main results  and has independent interest.

We first give a family of corks, which was introduced by Auckly-Kim-Melvin-Ruberman \cite{AKMR}. For an integer $n$, let $V_n$ be the contractible 4-manifold given by the left diagram of Figure~\ref{fig:V_n}. It is easy to check that this manifold is diffeomorphic to the right 4-manifold by isotopy, where the box $-\frac{n}{2}$ denotes $-n$ right-handed half twists. Note that the link is symmetric, i.e., there exists an involution $S^3\to S^3$ which exchanges the components of the link. Let $g_n:\partial V_n\to \partial V_n$ be the involution obtained by first surgering $S^1\times D^3$ to $D^2\times S^2$ and then surgering the other $D^2\times S^2$ to $S^1\times D^3$ (i.e.\ replacing the dot and the zero in the diagram). In the case $n\geq 0$, by converting the 1-handle notation, we obtain the Stein handlebody presentation of $V_n$ in Figure~\ref{fig:Legendrian_V_n}, where the left-handed full twists in the box denote the Legendrian version shown in Figure~\ref{fig:Legendrian_left_twists}. Hence according to \cite{E1, G1}, $V_n$ admits a Stein structure for $n\geq 0$. We remark that $(V_0,g_0)$ is the same cork as $(W_1,f_1)$ in \cite{AY1}. As pointed out by a referee, our cork $(V_n,g_n)$ coincides with the cork $(C_h,\tau)$ $(h=-\frac{n}{2})$ introduced by Auckly-Kim-Melvin-Ruberman \cite{AKMR}.
\begin{figure}[h!]
\begin{center}
\includegraphics[width=3.5in]{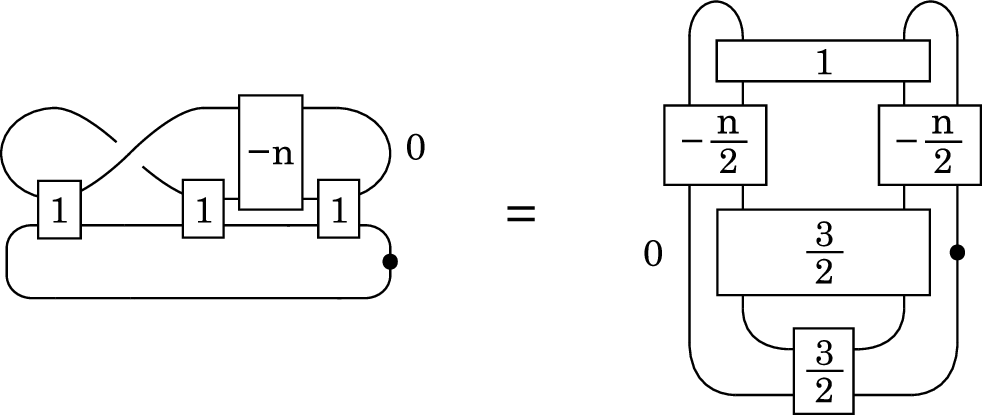}
\caption{Two diagrams of $V_n$}
\label{fig:V_n}
\end{center}
\end{figure}
\begin{figure}[h!]
\begin{center}
\includegraphics[width=3.2in]{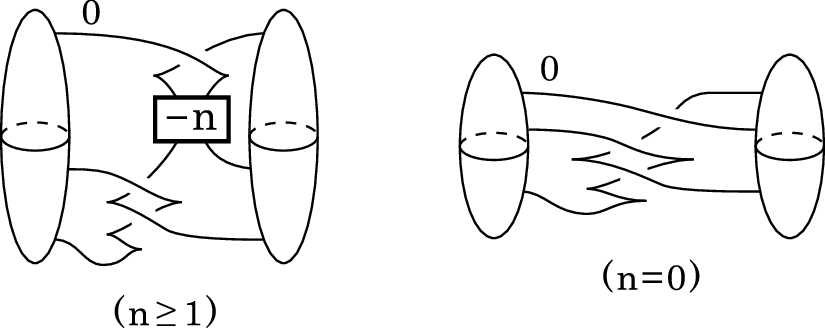}
\caption{A Stein handlebody presentation of $V_n$ $(n\geq 0)$}
\label{fig:Legendrian_V_n}
\end{center}
\end{figure}
\begin{figure}[h!]
\begin{center}
\includegraphics[width=3.8in]{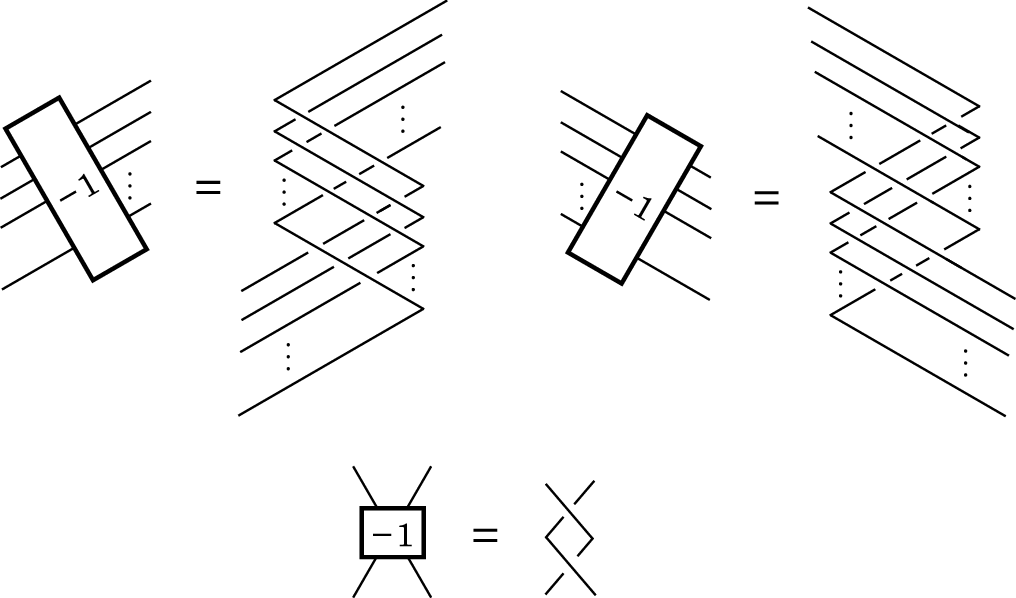}
\caption{Legendrian versions of a left-handed full twist}
\label{fig:Legendrian_left_twists}
\end{center}
\end{figure}

One can easily show the lemma below similarly to \cite{AM, AY1}. 
\begin{lemma}[\cite{AKMR}]\label{lem:V_n_cork}$(V_n, g_n)$ is a cork for any integer $n\geq 0$. 
\end{lemma}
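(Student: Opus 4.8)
The plan is to verify the three defining properties of a cork for the pair $(V_n,g_n)$: that $V_n$ is compact and contractible, that $g_n$ is an involution of $\partial V_n$ extending to a self-homeomorphism of $V_n$, and that $g_n$ extends to \emph{no} self-diffeomorphism of $V_n$. The first two points are essentially bookkeeping. Contractibility is read off from Figure~\ref{fig:V_n}: the unique $2$-handle runs geometrically once over the unique $1$-handle, so $V_n$ has the homology of a point, and the obvious presentation of $\pi_1$ coming from the handle decomposition is trivial. For the involution, recall that $g_n$ interchanges the dotted circle and the $0$-framed circle; since the defining link is symmetric, this interchange is realized by a self-diffeomorphism of $\partial V_n$, and carrying it out twice returns the diagram to itself, so $g_n^2=\mathrm{id}$.

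That $g_n$ extends to a self-homeomorphism of $V_n$ is then immediate: as $V_n$ is contractible, every self-diffeomorphism of $\partial V_n$ extends to a self-homeomorphism of $V_n$ by Freedman's theorem \cite{Fr}, exactly as recalled in Section~\ref{sec:preliminary}. The entire content of the lemma therefore concentrates in the remaining smooth non-extension statement, which I would establish in the style of \cite{AM, AY1}. The idea is to detect the twist inside an auxiliary $4$-manifold. Pick a framed knot $L\subset\partial V_n$ and form $Z=V_n\cup(\text{$2$-handle along }L)$ together with its twisted version $Z'=V_n\cup_{g_n}(\text{$2$-handle along }L)$, the latter being the same as attaching the handle along $g_n(L)$. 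If $g_n$ extended to a self-diffeomorphism $G$ of $V_n$, then $G$ would extend across the handle to a diffeomorphism $Z\cong Z'$, so it suffices to exhibit a single $L$ with $Z\not\cong Z'$.

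I would choose $L$ to admit a Legendrian representative with framing $tb-1$ so that, using the Stein handle picture of $V_n$ in Figure~\ref{fig:Legendrian_V_n} together with \cite{E1, G1}, the manifold $Z$ carries a Stein structure with a large value of $ad$; the adjunction Theorem of Section~\ref{sec:preliminary} then forces a lower bound on the relevant shake genus $g_s^{(\cdot)}$. On the twisted side $L$ is arranged so that $Z'$ is standard, with strictly smaller shake genus (or is manifestly a different manifold), and this numerical gap yields $Z\not\cong Z'$ and hence the required contradiction. The expected main obstacle is precisely the explicit choice of $L$ and the verification of both ends of this inequality: producing the Stein structure and the adjunction lower bound on $Z$, and simplifying the twisted diagram to pin down the smaller shake genus of $Z'$. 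These are the genuinely computational steps, though they are routine given \cite{AM, AY1}; note moreover that the base case $n=0$ is already covered, since $(V_0,g_0)$ coincides with the known cork $(W_1,f_1)$ of \cite{AY1}.
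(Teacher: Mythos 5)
Your proposal follows exactly the route the paper intends: the paper gives no argument beyond ``We can easily show the lemma below similarly to \cite{AM, AY1}'', and your outline (contractibility from the handle diagram, the involution from the symmetry of the link, topological extension via Freedman \cite{Fr}, and smooth non-extension by attaching a $2$-handle along a suitable Legendrian $L$ and comparing the Stein/adjunction bound on $Z$ with the standard twisted manifold $Z'$) is precisely the Akbulut--Matveyev/Akbulut--Yasui method being invoked. One small correction: the $2$-handle of $V_n$ runs over the $1$-handle \emph{algebraically} once, not geometrically once --- if it ran over geometrically once the pair would cancel and $V_n$ would be $D^4$, which is not a cork; the algebraic count $\pm 1$ is what gives $H_*(V_n)\cong H_*(\mathrm{pt})$ and, since there is only one generator, the triviality of $\pi_1$.
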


We next introduce a new description of cork twists. For an integer $n$, let $V_n^*$ be the left contractible 4-manifold in Figure~\ref{fig:V_n_star}. Note that this manifold is diffeomorphic to the right 4-manifold by isotopy. We present a knot $K$ in $S^3$ by using a tangle $T_K$ as shown in Figure~\ref{fig:tangle_smooth}. We will show that a cork twist along $(V_n,g_n)$ can be obtained as in the lower side of Figure~\ref{fig:new_description}. Note that the upper side describes a cork twist along $(V_n,g_n)$. In particular, for any knot $K$ and any integers $n,k$, hooking the $k$-framed knot $K$ to the 1-handles of $V_n$ and $V_n^*$ in the same way has an effect of a cork twist along the obvious $(V_n,g_n)$. This is of independent interest because all known cork twists are described by an exchange of a dot and zero in the language of handlebody diagram (e.g.\ the upper side). Due to this description, one might expect that $V_n^*$ is an exotic copy of $V_n$. However, it turns out that $V_n^*$ is diffeomorphic to $V_n$. 

\begin{figure}[h!]
\begin{center}
\includegraphics[width=4.2in]{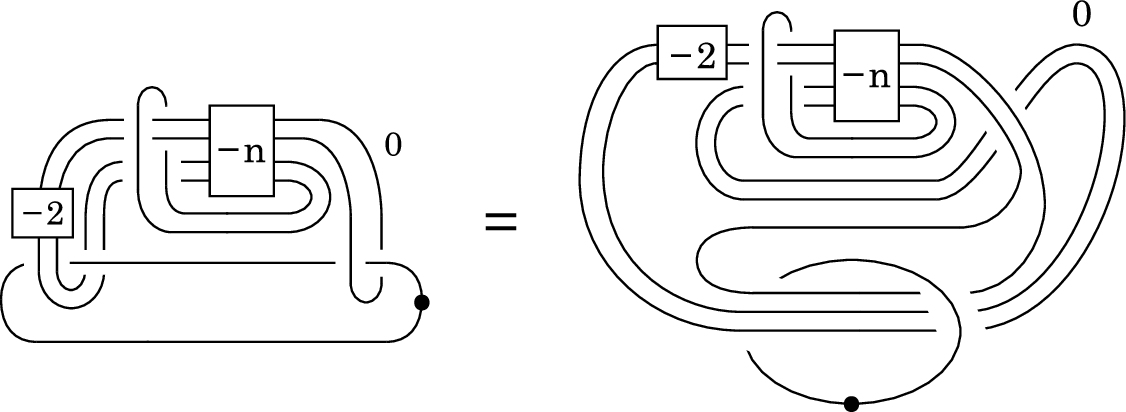}
\caption{Two diagrams of $V_n^*$}
\label{fig:V_n_star}
\end{center}
\end{figure}

\begin{figure}[h!]
\begin{center}
\includegraphics[width=0.8in]{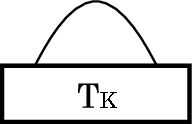}
\caption{A knot $K$ in $S^3$ given by a tangle $T_K$.}
\label{fig:tangle_smooth}
\end{center}
\end{figure}

\begin{figure}[h!]
\begin{center}
\includegraphics[width=3.9in]{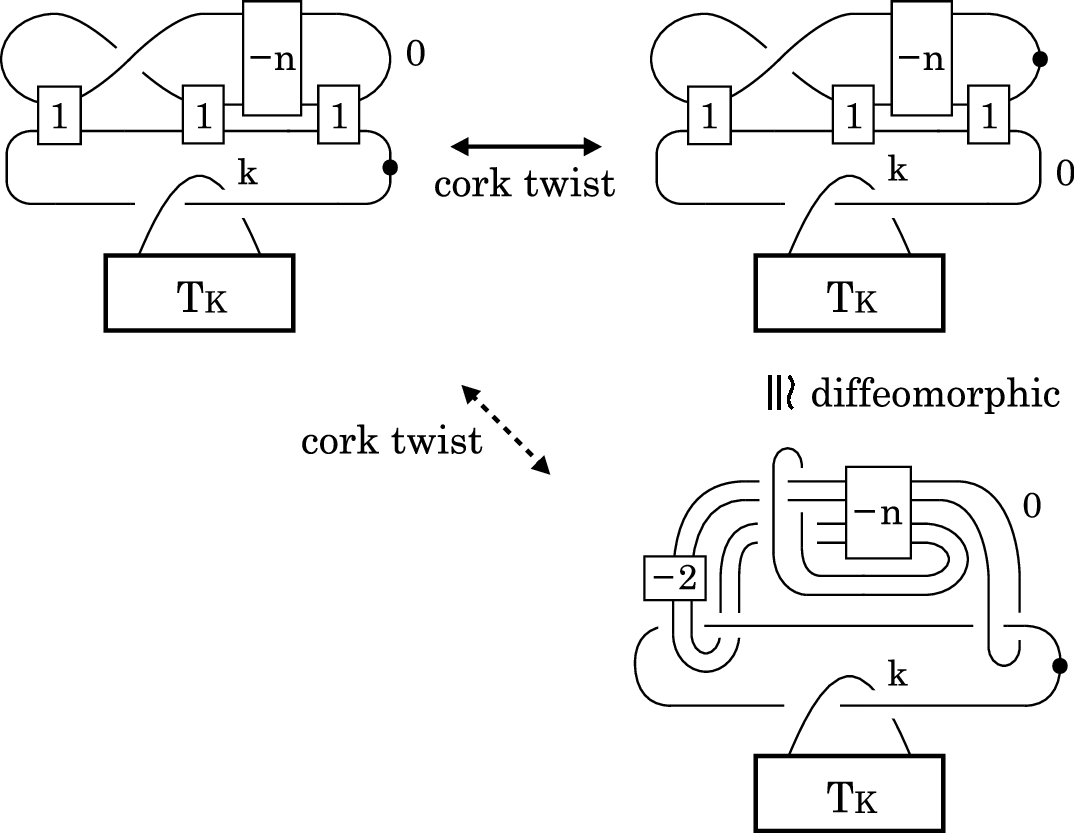}
\caption{A new description of a cork twist (lower side)}
\label{fig:new_description}
\end{center}
\end{figure}

To obtain the new description, we construct a diffeomorphism $\partial V_n\to \partial V_n^*$. For a knot $K$ in $S^3$, let $\gamma_{K}$ and $\gamma_{K}^*$ be knots in $\partial V_n$ and $\partial V_n^*$ given by Figure~\ref{fig:diffeo_star}, respectively. 

\begin{figure}[h!]
\begin{center}
\includegraphics[width=3.6in]{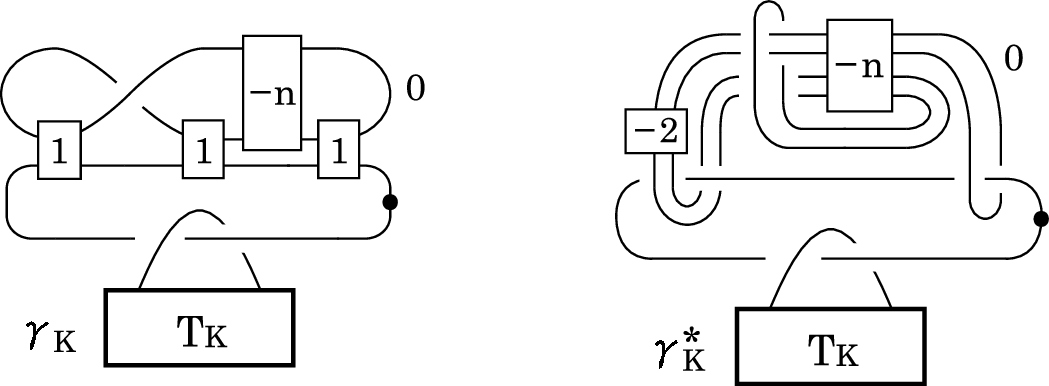}
\caption{Knots $\gamma_K$ and $\gamma_K^*$ in $\partial V_n$ and $\partial V_n^*$}
\label{fig:diffeo_star}
\end{center}
\end{figure}

\begin{theorem}\label{thm:boundary_V_n^*}For each integer $n$, there exists a diffeomorphism $g_n^*:\partial V_n\to \partial V_n^*$ satisfying the following conditions. 
\begin{itemize}
 \item The diffeomorphism $g_n^*$ sends the knot $\gamma_K$ to $\gamma_K^*$ for any knot $K$ in $S^3$. 
 \item The diffeomorphism $g_n^*\circ g_n^{-1}: \partial V_n\to \partial V_n^*$ extends to a diffeomorphism $V_n\to V_n^*$. In particular, $V_n^*$ is diffeomorphic to $V_n$. 
\end{itemize}
\end{theorem}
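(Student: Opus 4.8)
The plan is to prove everything by Kirby calculus performed in the complement of a fixed ball containing the tangle $T_K$, so that a single boundary diffeomorphism works simultaneously for all knots $K$. The first step is to replace the $4$-dimensional handle pictures by genuine surgery descriptions of the boundary $3$-manifolds: since a dotted circle and a $0$-framed $2$-handle along the same curve yield the same boundary, both $\partial V_n$ and $\partial V_n^*$ become $0$-surgeries on links in $S^3$, with the knots $\gamma_K$ and $\gamma_K^*$ drawn alongside. By the symmetry of the $V_n$ diagram emphasized in the text, the two $0$-framed components of $\partial V_n$ are interchanged by an isotopy of $S^3$; this interchange is exactly the cork involution $g_n$, and recording it here will be convenient for the extension argument.

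The second step is to exhibit $g_n^*$ as an explicit finite sequence of moves --- ambient isotopies, handle slides, and a single dot-zero exchange --- carrying the surgery diagram of $\partial V_n$ to that of $\partial V_n^*$ and, at the same time, carrying $\gamma_K$ to $\gamma_K^*$. I would arrange the whole sequence to be supported away from the ball meeting the diagram in $T_K$; since $\gamma_K$ and $\gamma_K^*$ agree with their unknotted models outside this ball, the resulting diffeomorphism is independent of $K$ and sends $\gamma_K$ to $\gamma_K^*$ for every $K$. This yields the first bullet.

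For the second bullet I would separate the moves into two types: those legal already at the level of the $4$-manifold (isotopies, handle slides, and handle cancellations), which extend to diffeomorphisms of the $4$-manifolds on which they are performed, and the lone dot-zero exchange, which sees only the boundary and is precisely $g_n$. The aim is to organize the sequence so that $g_n^*=h\circ g_n$, where $h:\partial V_n\to\partial V_n^*$ is assembled from $4$-dimensionally legal moves alone and hence extends to a diffeomorphism $V_n\to V_n^*$. Then $g_n^*\circ g_n^{-1}=h$ extends, which is the second bullet; in particular $V_n^*$ is diffeomorphic to $V_n$.

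I expect the main obstacle to be the explicit calculation in the second step: producing a move sequence that turns the $V_n$ diagram into the $V_n^*$ diagram while correctly bookkeeping every framing and linking number, and verifying that the hooking of $\gamma_K$ through the $1$-handle is matched to that of $\gamma_K^*$ (not merely that the ambient $3$-manifolds agree). A closely related technical point is isolating a single dot-zero exchange as the factor $g_n$, so that the residual map $h$ is visibly built from slides and cancellations and therefore extends over the contractible $4$-manifolds; the simplified right-hand diagrams of $V_n$ and $V_n^*$ given in the text should serve as useful targets to steer this reduction.
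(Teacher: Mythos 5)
Your proposal follows essentially the same route as the paper: the actual proof defines $g_n^*$ by a chain of diagram moves (Figure~\ref{fig:new_description_proof}) whose first step is the dot--zero exchange $g_n$ and whose remaining steps are isotopies, slides and cancellations each induced by a diffeomorphism of the underlying $4$-manifolds, so that $g_n^*\circ g_n^{-1}$ is exactly the extendable factor $h$; the moves are performed away from the tangle ball so that $\gamma_K\mapsto\gamma_K^*$ uniformly in $K$. The only work you have deferred --- producing the explicit move sequence --- is precisely what the paper's figure supplies, so your plan is a faithful blueprint of the given argument.
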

\begin{proof}Let $g_n^*:\partial V_n\to \partial V_n^*$ be the diffeomorphism defined by Figure~\ref{fig:new_description_proof}. (For the step from the seventh diagram to the eighth, use the isotopy in Figure~\ref{fig:isotopy_full_twist} after canceling the 1-handle.)  The first condition of the claim is obvious from the figure. 
The diffeomorphism $g_n^*\circ g_n^{-1}$ is clearly given by the procedure from the second diagram to the last diagram in the figure. Since each step is induced from a diffeomorphism between the obvious 4-manifolds, the second condition follows. 
\end{proof}

\begin{figure}[h!]
\begin{center}
\includegraphics[width=4.1in]{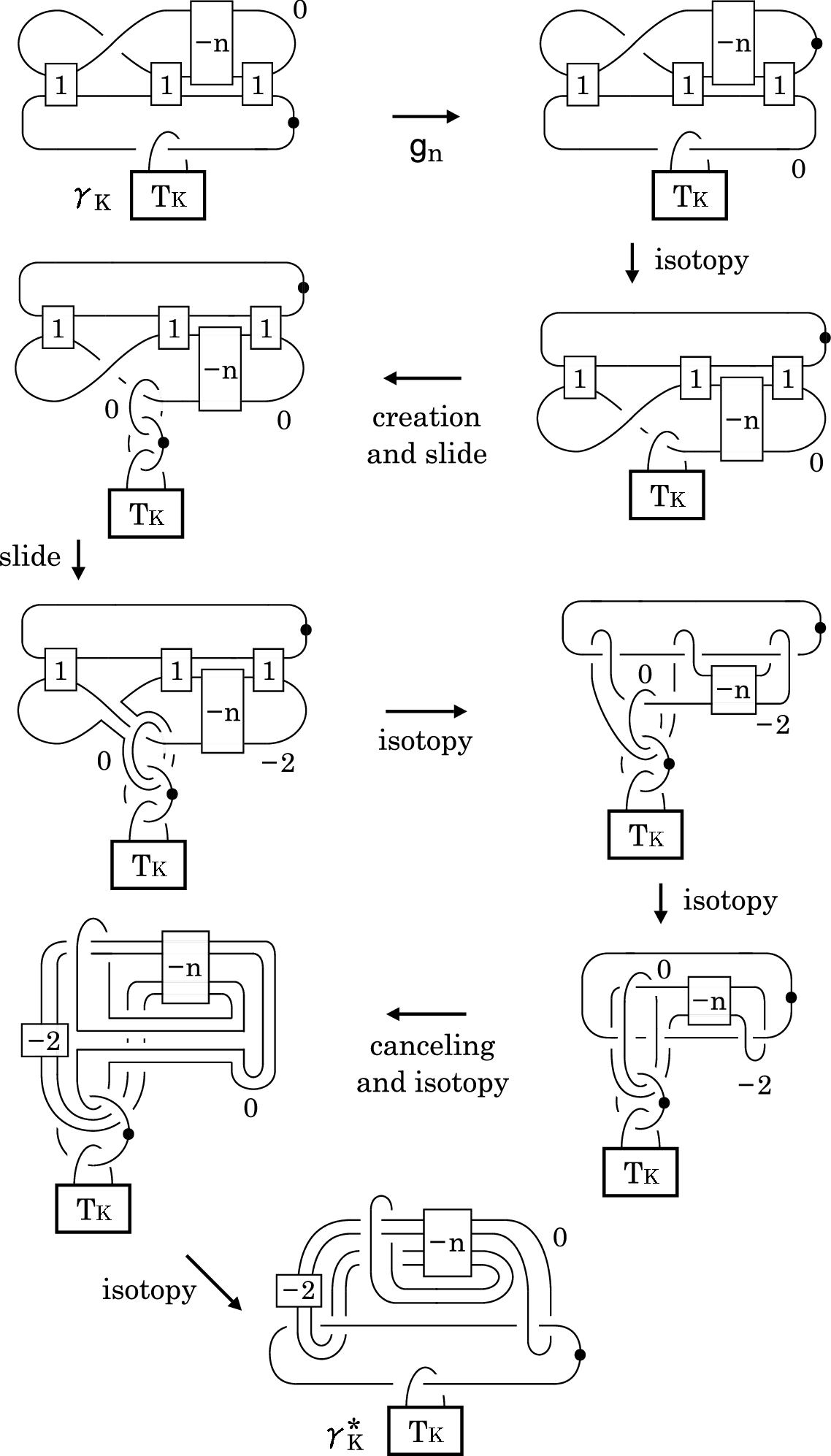}
\caption{The diffeomorphism $g_n^*:\partial V_n\to \partial V_n^*$}
\label{fig:new_description_proof}
\end{center}
\end{figure}

\begin{figure}[h!]
\begin{center}
\includegraphics[width=3.0in]{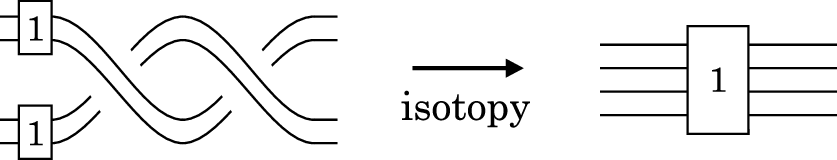}
\caption{Isotopy}
\label{fig:isotopy_full_twist}
\end{center}
\end{figure}
\begin{corollary}\label{cor:new_description}Assume that a smooth 4-manifold $X$ contains $V_n$ $(n\geq 0)$ as a submanifold. Then the 4-manifold obtained from $X$ by removing $V_n$ and gluing $V_n^*$ via the gluing map $g_n^*$ is diffeomorphic to the 4-manifold obtained from $X$ by the cork twist along $(V_n,g_n)$. 
\end{corollary}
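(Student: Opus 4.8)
The plan is to write down an explicit diffeomorphism, built from the identity on the complement of the cork together with the diffeomorphism $V_n \to V_n^*$ supplied by Theorem~\ref{thm:boundary_V_n^*}. Set $Y = X \setminus \mathrm{int}(V_n)$, so that the original inclusion identifies $\partial Y$ with $\partial V_n$ and $X = Y \cup_{\mathrm{id}} V_n$. Under this identification the cork twist along $(V_n, g_n)$ is the manifold $X_g := Y \cup_{g_n} V_n$, obtained by regluing $V_n$ along its boundary via the involution $g_n$; here a point $p \in \partial V_n$ is attached to $g_n(p) \in \partial Y$. Likewise, the manifold obtained by deleting $V_n$ and inserting $V_n^*$ via $g_n^*$ is $X^* := Y \cup_{g_n^*} V_n^*$, where $y \in \partial Y \cong \partial V_n$ is attached to $g_n^*(y) \in \partial V_n^*$. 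The goal is to produce a diffeomorphism $F : X_g \to X^*$.

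First I would invoke Theorem~\ref{thm:boundary_V_n^*} to obtain the diffeomorphism $\Phi : V_n \to V_n^*$ extending $g_n^* \circ g_n^{-1} : \partial V_n \to \partial V_n^*$. I then define $F$ to be the identity on the common piece $Y$ and to equal $\Phi$ on $V_n$. It remains only to check that these two prescriptions agree along the gluing locus, i.e.\ that $F$ descends to the quotients. Take $p \in \partial V_n$; in $X_g$ it is identified with $g_n(p) \in \partial Y$. Its two images are $\Phi(p) = g_n^*(g_n^{-1}(p)) \in \partial V_n^*$ and $F(g_n(p)) = g_n(p) \in \partial Y$. In $X^*$ the boundary point $g_n(p) \in \partial Y$ is glued to $g_n^*(g_n(p)) \in \partial V_n^*$, so compatibility of $F$ amounts to the identity $g_n^*(g_n^{-1}(p)) = g_n^*(g_n(p))$. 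This holds precisely because $g_n$ is an involution, so $g_n^{-1} = g_n$. Hence $F$ is a well-defined diffeomorphism, proving $X_g \cong X^*$.

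The argument is essentially formal once Theorem~\ref{thm:boundary_V_n^*} is in hand: all of the genuine geometric content --- the construction of $g_n^*$ and the extension $\Phi$ of $g_n^* \circ g_n^{-1}$ over $V_n$ --- has already been carried out there. Thus the main (and only real) point to be careful about is the bookkeeping of gluing conventions, and in particular the observation that the involution relation $g_n^{-1} = g_n$ is exactly what forces the two boundary prescriptions to match. I would note finally that the hypothesis $n \geq 0$ is used only to guarantee, via Lemma~\ref{lem:V_n_cork}, that $(V_n, g_n)$ is genuinely a cork, so that the phrase ``cork twist along $(V_n,g_n)$'' is meaningful; the diffeomorphism $F$ itself is produced in the same way for every integer $n$.
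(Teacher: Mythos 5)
Your proof is correct and follows essentially the same route as the paper: the paper's one-line argument is precisely the factorization $g_n^*=(g_n^*\circ g_n^{-1})\circ g_n$ together with the fact from Theorem~\ref{thm:boundary_V_n^*} that $g_n^*\circ g_n^{-1}$ extends over $V_n$, and your explicit map $F=\mathrm{id}_Y\cup\Phi$ is just that argument with the gluing bookkeeping written out. Your appeal to $g_n^{-1}=g_n$ is only an artifact of orienting the two gluing maps in opposite directions, but it is harmless and the verification is sound.
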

\begin{proof}Since $g_n^*\circ g_n^{-1}: \partial V_n\to \partial V_n^*$ extends to a diffeomorphism $V_n\to V_n^*$, and $g_n^*=(g_n^*\circ g_n^{-1})\circ g_n$, the claim follows. 
\end{proof}
In summary, $V_n^*$ is diffeomorphic to $V_n$, and a cork twist along $(V_n,g_n)$ has the same effect as a surgery along $V_n$ via the gluing map $g_n^*:\partial V_n\to \partial V_n^*$, as shown in Figure~\ref{fig:new_description}.  
\section{Proofs of the main results}
In this section we prove our main results. For integers $n,m$, let $P_{n,m}$ and $Q_{n,m}$ be the knots in unknotted solid tori in $S^3$ given by Figure~\ref{fig:pattern_P_nm}. Here the dotted lines indicate solid tori, and the box $n$ denotes $|n|$ right-handed (resp.\ left-handed) full twists if $n$ is positive (resp.\ negative). In the case where $m=0$, they have the simple diagrams shown in Figure~\ref{fig:pattern}. 
We regard $P_{n,m}$ and $Q_{n,m}$ as (untwisted) satellite maps. Note that the knot $P_{n,m}(K)$ (resp.\ $Q_{n,m}(K)$) is isotopic to the $n$-twisted satellite of a knot $K$ with the pattern $P_{0,m}$ (resp.\ $Q_{0,m}$). Let $P_{n,m}^{(n)}(K)$ (resp.\ $Q_{n,m}^{(n)}(K)$) denote the 4-manifold represented by the knot $P_{n,m}(K)$ (resp.\ $Q_{n,m}(K)$) with $n$-framing. 

\begin{figure}[h!]
\begin{center}
\includegraphics[width=2.8in]{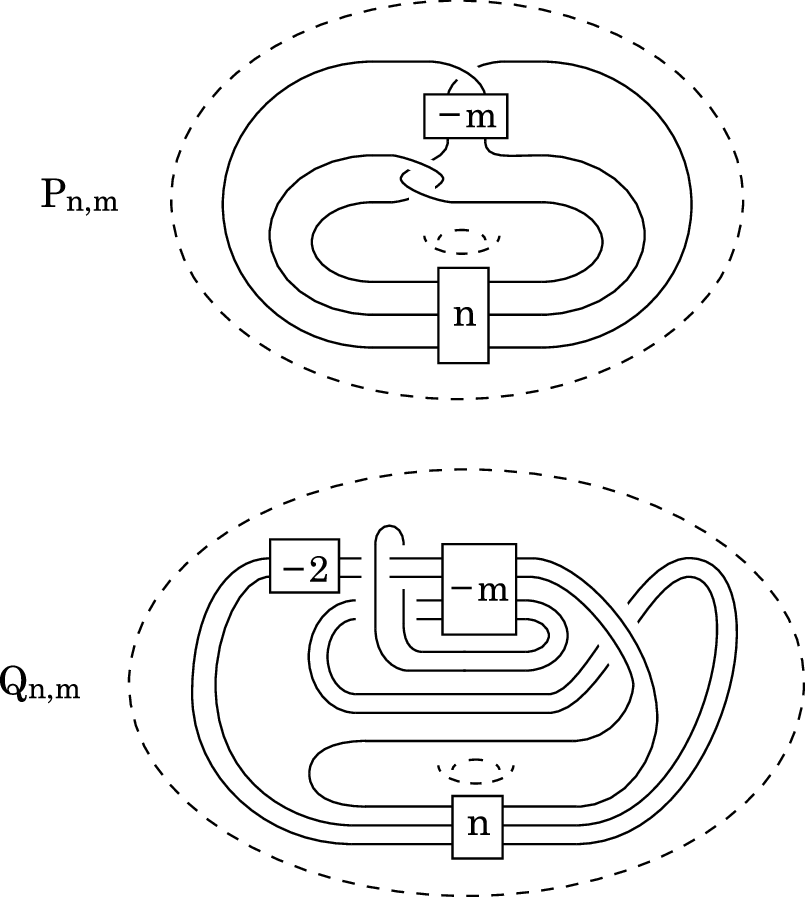}
\caption{Pattern knots $P_{n,m}$ and $Q_{n,m}$ $(n,m\in \mathbb{Z})$ in unknotted solid tori in $S^3$.}
\label{fig:pattern_P_nm}
\end{center}
\end{figure}
\begin{figure}[h!]
\begin{center}
\includegraphics[width=4.3in]{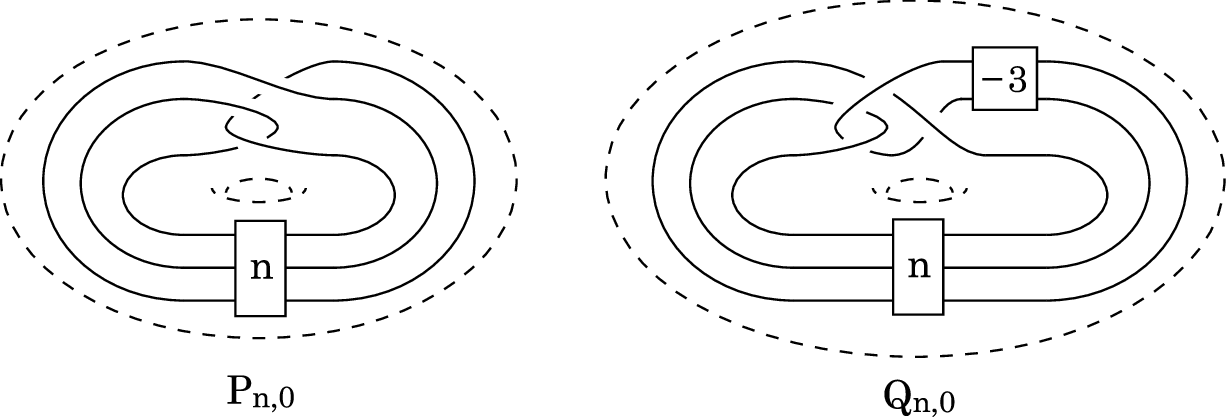}
\caption{Pattern knots $P_{n,0}$ and $Q_{n,0}$ $(n\in \mathbb{Z})$ in unknotted solid tori in $S^3$.}
\label{fig:pattern}
\end{center}
\end{figure}

Here we prove that the pair of satellite maps $P_{n,m}$ and $Q_{n,m}$ produces many pairs of exotic $n$-framed non-concordant knots. 
\begin{theorem}\label{thm:satellite:detail}Fix integers $n$ and $m$ with $m\geq 0$. For each knot $K$ in $S^3$ satisfying $2g_4(K)=\overline{ad}(K)+2$ and $n\leq \widehat{tb}(K)$, the knots $P_{n,m}(K)$ and $Q_{n,m}(K)$ satisfy the following conditions. 
\begin{itemize}
  \item The 4-manifolds $P_{n,m}^{(n)}(K)$ and $Q_{n,m}^{(n)}(K)$ are homeomorphic but not diffeomorphic to each other. Furthermore, both of these 4-manifolds admit Stein structures if $n\leq \overline{tb}(K)-1$. 
 \item $g_4(P_{n,m}(K))=g_4(K)+1$ and $g_4(Q_{n,m}(K))=g_4(K)$. 
\end{itemize}
Consequently, the knots $P_{n,m}(K)$ and $Q_{n,m}(K)$ are not concordant for any orientations, and their $n$-surgeries yield the same 3-manifold. 
\end{theorem}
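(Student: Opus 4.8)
The plan is to use the hook surgery description of Section~\ref{sec:new_description} to exhibit $P_{n,m}^{(n)}(K)$ and $Q_{n,m}^{(n)}(K)$ as the two sides of a cork twist, and then to separate their diffeomorphism types by the $n$-shake genus, which I will bound below through the adjunction inequality and above through the $4$-genus.

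\textbf{Step 1 (the pair is a cork twist).} First I would read off, by comparing the pattern knots of Figure~\ref{fig:pattern_P_nm} with the hook surgery of Figure~\ref{fig:new_description}, that $P_{n,m}$ and $Q_{n,m}$ are exactly the patterns produced by hooking the $n$-framed companion to the $1$-handles of $V_n$ and $V_n^{*}$ in the same way. Then Theorem~\ref{thm:boundary_V_n^*} and Corollary~\ref{cor:new_description} identify the passage from $P_{n,m}^{(n)}(K)$ to $Q_{n,m}^{(n)}(K)$ with the surgery $V_n\rightsquigarrow V_n^{*}$ along $g_n^{*}$, i.e.\ a cork twist along $(V_n,g_n)$. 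Two consequences follow at once: the operation is supported in the interior, so the two $4$-manifolds share the same boundary, the common $n$-surgery of $P_{n,m}(K)$ and $Q_{n,m}(K)$; and, since $V_n$ is contractible, the regluing extends to a self-homeomorphism of $V_n$, so the two $4$-manifolds are homeomorphic.

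\textbf{Step 2 (shake-genus bounds).} The core of the argument is to realize the two satellites as Legendrian knots with Thurston--Bennequin number $n+1$. Starting from a Legendrian representative $\mathcal{K}_0$ of $K$ with $tb(\mathcal{K}_0)=\widehat{tb}(K)$ and $ad(\mathcal{K}_0)=\overline{ad}(K)$, which exists by the definition of $\widehat{tb}(K)$, I would form the Legendrian satellite using the front of the pattern $P_{0,m}$ (resp.\ $Q_{0,m}$). Because $n\le\widehat{tb}(K)$, the $n$-framing is nonpositive relative to the contact framing of $\mathcal{K}_0$ and hence Legendrian realizable, so the resulting knot $\mathcal{P}$ (resp.\ $\mathcal{Q}$) attains $tb=n+1$; tracking rotation numbers, the pattern $P_{0,m}$ raises the adjunction number by $2$ while $Q_{0,m}$ leaves it unchanged, so $ad(\mathcal{P})=\overline{ad}(K)+2$ and $ad(\mathcal{Q})=\overline{ad}(K)$. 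Feeding these into the Stein adjunction inequality and using the hypothesis $2g_4(K)=\overline{ad}(K)+2$ gives
\[
g_s^{(n)}(P_{n,m}(K))\ge g_4(K)+1, \qquad g_s^{(n)}(Q_{n,m}(K))\ge g_4(K).
\]

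\textbf{Step 3 ($4$-genus and conclusion).} For the matching upper bounds I would build explicit surfaces in $D^4$, capping a minimal-genus surface for $K$ with the genus carried by the pattern, to get $g_4(P_{n,m}(K))\le g_4(K)+1$ and $g_4(Q_{n,m}(K))\le g_4(K)$. Combining with $g_s^{(n)}\le g_4$ and Step~2 pins down
\[
g_s^{(n)}(P_{n,m}(K))=g_4(P_{n,m}(K))=g_4(K)+1, \qquad g_s^{(n)}(Q_{n,m}(K))=g_4(Q_{n,m}(K))=g_4(K).
\]
As the $n$-shake genus is a diffeomorphism invariant of the underlying $4$-manifold and these values differ, $P_{n,m}^{(n)}(K)$ and $Q_{n,m}^{(n)}(K)$ are not diffeomorphic, which with Step~1 gives the first bullet; when $n\le\overline{tb}(K)-1$ a maximal-$tb$ representative of $K$ realizes both satellites as Legendrian knots with $tb=n+1$, so both handlebodies are Stein. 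Finally, since concordant knots have equal $4$-genus and the $4$-genus ignores orientation, $g_4(P_{n,m}(K))\ne g_4(Q_{n,m}(K))$ rules out a concordance for every choice of orientations, while the boundary equality of Step~1 gives the common $n$-surgery.

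\textbf{Main obstacle.} I expect Step~2 to be the delicate point: verifying that the Legendrian satellite of the maximal-adjunction representative really attains $tb=n+1$ with the adjunction number raised by exactly $2$ for $P_{0,m}$ and by $0$ for $Q_{0,m}$, and that the thresholds $n\le\widehat{tb}(K)$ and $n\le\overline{tb}(K)-1$ are precisely what these realizations demand. The $4$-genus upper-bound surfaces of Step~3 are the other place requiring care, though they become routine once the patterns are drawn.
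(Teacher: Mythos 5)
Your proposal is correct and follows essentially the same route as the paper: homeomorphism via the hook-surgery/cork-twist description plus Freedman's theorem, non-diffeomorphism by computing $g_s^{(n)}(P_{n,m}(K))=g_4(K)+1$ from the Stein adjunction inequality against $g_s^{(n)}(Q_{n,m}(K))\leq g_4(Q_{n,m}(K))=g_4(K)$, and non-concordance from the differing $4$-genera. Two minor slips: the relevant cork is $(V_m,g_m)$, not $(V_n,g_n)$ (the subscript $m$ indexes the cork, while $n$ is the twisting/framing), and your lower bound $g_s^{(n)}(Q_{n,m}(K))\geq g_4(K)$ is only justified for $n\leq\widehat{tb}(K)-1$ (cf.\ Remark~\ref{cor:tau;shake1}) --- fortunately it is not needed, since the upper bound $g_s^{(n)}(Q_{n,m}(K))\leq g_4(K)$ already separates the two shake genera.
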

\begin{remark}\label{thm:torus:remark}
$(1)$ There are infinitely many knots $K$ satisfying $2g_4(K)=\overline{ad}(K)+2$. For example, it is well-known that any positive $(p,q)$-torus knot $T_{p,q}$ satisfies this assumption, and $\widehat{tb}(T_{p,q})=\overline{tb}(T_{p,q})=pq-p-q$. This can be easily seen from an appropriate Legendrian realization and the adjunction inequality. Therefore, by Corollary in \cite{So}, this theorem produces infinitely many distinct pairs of exotic $n$-framed non-concordant knots for each integer $n$.
\smallskip\\
\noindent $(2)$ If a knot $K$ in $S^3$  satisfies the assumption of this theorem for fixed integers $n,m$, then both of the knots $P_{n,m}(K)$ and $Q_{n,m}(K)$ also satisfy the assumption (see Remark~\ref{rem:knots}). Therefore, just by iterating this operation to any knot satisfying the assumption, we obtain infinitely many distinct knots satisfying the assumption. For interesting applications of iterated satellite knots, see \cite{CR, Ray}. 
\smallskip\\
\noindent $(3)$ The knot $Q_{n,m}(K)$ is concordant to $K$ for any integers $n,m$. This can be easily seen from the diagram of $Q_{n,m}$ by checking that the pattern knot $Q_{n,m}$ is a band sum of the longitude of the solid torus and an unknot which are unlinking. 
\end{remark}


To prove this theorem, we show the lemma below. As seen from the proof, we obtain the satellite maps $P_{n,m}$ and $Q_{n,m}$ from the cork $V_m$ and its alternative description $V_m^*$. 

\begin{lemma}\label{proof:lem:cork}
For any integers $n, m$ and any knot $K$ in $S^3$, the 4-manifold $P_{n,m}^{(n)}(K)$ is homeomorphic to $Q_{n,m}^{(n)}(K)$. 
\end{lemma}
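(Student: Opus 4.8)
The plan is to realize $P_{n,m}^{(n)}(K)$ and $Q_{n,m}^{(n)}(K)$ as the two sides of the new description of a cork twist along $(V_m,g_m)$ developed in Section~\ref{sec:new_description}, and then to conclude via the elementary fact that a cork twist, being a regluing by a boundary diffeomorphism that extends to a self-homeomorphism of a contractible manifold, preserves the homeomorphism type of the ambient space. Once the two satellite constructions are matched with the two hooked corks, the homeomorphism is immediate; the only real work is that matching, which is pure handle calculus.

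First I would rewrite the satellite knots in handlebody language. Presenting $K$ by the tangle $T_K$ of Figure~\ref{fig:tangle_smooth}, I would show that attaching an $n$-framed $2$-handle along $Q_{n,m}(K)$ is, after isotopy and conversion of the $1$-handle notation, exactly the result of hooking the $n$-framed knot $K$ into the $1$-handle of the cork $V_m$ as on the left of Figure~\ref{fig:new_description}. The point is that the pattern $Q_{n,m}$ of Figures~\ref{fig:pattern_P_nm} and~\ref{fig:pattern} is designed so that, when $K$ is run through the dotted $1$-handle of $V_m$ and that $1$-handle is cancelled, the resulting knot in $S^3$ is precisely $Q_{n,m}(K)$, with the $m$ full twists supplied by the twist box of $V_m$ and the surgery coefficient equal to $n$. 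I would carry out the parallel computation for $P_{n,m}(K)$, verifying that the $n$-framed $2$-handle attachment along $P_{n,m}(K)$ agrees with hooking the same $n$-framed $K$ into $V_m^*$ via the gluing $g_m^*$, i.e.\ the right side of Figure~\ref{fig:new_description}.

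With both identifications established, Theorem~\ref{thm:boundary_V_n^*} applies: since $g_m^*\circ g_m^{-1}$ extends to a diffeomorphism $V_m\to V_m^*$ and $g_m^*=(g_m^*\circ g_m^{-1})\circ g_m$, the $V_m^*$-side built by $g_m^*$ is diffeomorphic to the manifold obtained from the $V_m$-side by removing $V_m$ and regluing by the involution $g_m$ (this is the content of Corollary~\ref{cor:new_description}, whose proof uses only the extension of $g_m^*\circ g_m^{-1}$ and so is valid for every integer $m$). Hence $P_{n,m}^{(n)}(K)$ and $Q_{n,m}^{(n)}(K)$ are related by this $g_m$-regluing. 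Because $V_m$ is contractible, the boundary diffeomorphism $g_m$ extends to a self-homeomorphism of $V_m$ by Freedman's theorem recalled in the Corks subsection~\cite{Fr}; consequently the regluing does not change the homeomorphism type, and the two $4$-manifolds are homeomorphic.

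The main obstacle is the handle calculus of the second paragraph: faithfully tracking the tangle $T_K$ and the framing through the cancellation of the $1$-handles, so that the two hooked corks reproduce the specific satellites $Q_{n,m}(K)$ and $P_{n,m}(K)$ with the correct $n$-framings, rather than merely some other satellites of $K$. I would need to check in particular that the dependence on $m$ enters only through the twist box of $V_m$ and that the coefficient $n$ is preserved under every isotopy and handle slide, since the statement is asserted for all integers $n$ and $m$ at once. Note that for this lemma the non-diffeomorphism, and even the cork property of $(V_m,g_m)$ from Lemma~\ref{lem:V_n_cork}, are irrelevant: only the homeomorphism-preserving nature of the regluing is used here.
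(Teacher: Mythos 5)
Your proposal is correct and follows essentially the same route as the paper: present $K$ by a tangle, identify the two $n$-framed handlebodies with $V_m$ and $V_m^*$ hooked by $K$, invoke Theorem~\ref{thm:boundary_V_n^*}, and conclude by Freedman's extension theorem (the paper applies it directly to $g_m^*$ as a diffeomorphism between boundaries of contractible $4$-manifolds, which is the same fact you use after first converting to the $g_m$-regluing; your observation that the restriction $m\geq 0$ is not needed here is also how the paper proceeds). The only discrepancy is that in Figure~\ref{fig:Q_nm_new_description} it is $P_{n,m}^{(n)}(K)$ that contains $V_m$ and $Q_{n,m}^{(n)}(K)$ that contains $V_m^*$, the reverse of your matching, which is immaterial for the symmetric homeomorphism conclusion.
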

\begin{proof}
We present a knot $K$ using a tangle $T_K$ as in Figure~\ref{fig:tangle_smooth}. We can easily check that $P_{n,m}^{(n)}(K)$ and $Q_{n,m}^{(n)}(K)$ are respectively diffeomorphic to the left and the upper right 4-manifolds in Figure~\ref{fig:Q_nm_new_description}, by canceling the 1-handles (see also Figure~\ref{fig:Legendrian_V_n} and the right diagram in Figure~\ref{fig:V_n_star}). By Theorem~\ref{thm:boundary_V_n^*}, $Q_{n,m}^{(n)}(K)$ is obtained from $P_{n,m}^{(n)}(K)$ by removing $V_m$ and gluing $V_m^*$ via the gluing map $g_n^*$. Since any diffeomorphism between the boundaries of contractible smooth 4-manifolds extends to a homeomorphism between the 4-manifolds (\cite{Fr}), the claim follows. 
\end{proof}
\begin{figure}[h!]
\begin{center}
\includegraphics[width=4.1in]{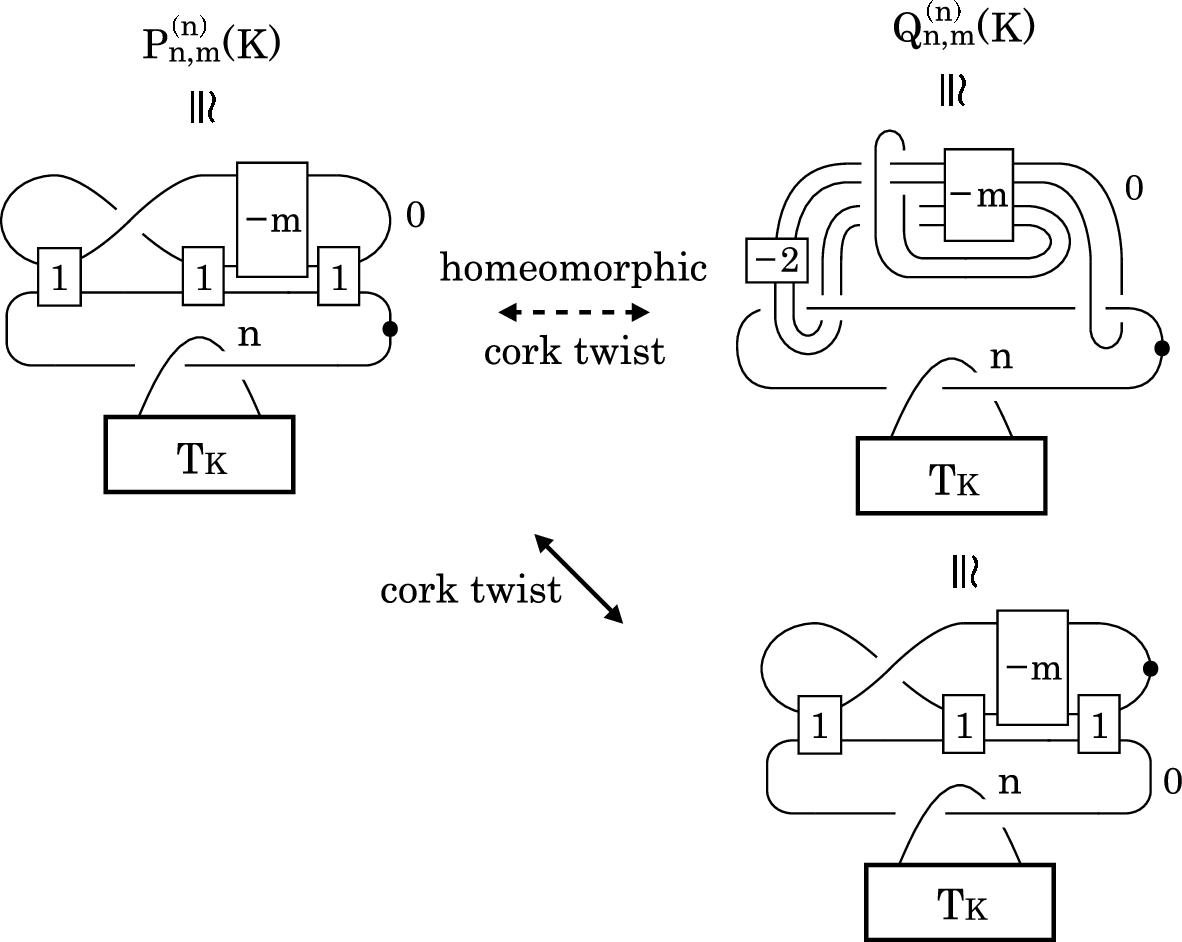}
\caption{The left and the right 4-manifolds are diffeomorphic to $P_{n,m}^{(n)}(K)$ and $Q_{n,m}^{(n)}(K)$, respectively.}
\label{fig:Q_nm_new_description}
\end{center}
\end{figure}

\begin{remark}It follows from Corollary~\ref{cor:new_description} (see also Figure~\ref{fig:Q_nm_new_description}) that $P_{n,m}^{(n)}(K)$ is obtained from $Q_{n,m}^{(n)}(K)$ by a cork twist along $(V_m, g_m)$ (in the case $m\geq 0$). 
\end{remark}

Akbulut and the author \cite{AY5} gave an algorithm which produces arbitrarily many exotic Stein 4-manifolds  by applying corks. Adapting the argument to our simple case, we prove Theorem~\ref{thm:satellite:detail}. 
\begin{proof}[Proof of Theorem~\ref{thm:satellite:detail}]
Fix integers $n$ and $m$ with $m\geq 0$. Let $K$ be a knot in $S^3$ satisfying $2g_4(K)=\overline{ad}(K)+2$ and $n\leq \widehat{tb}(K)$. We first give Legendrian representatives of the satellite knots $P_{n,m}(K)$ and $Q_{n,m}(K)$. 
Due to the assumption on $K$, there exists a Legendrian representative of $K$ with $tb=\widehat{tb}(K)$ and $ad=\overline{ad}(K)$. Since $n\leq \widehat{tb}(K)$, by adding zig-zags to a front diagram of the representative, we get a Legendrian representative $\mathcal{K}$ of $K$ satisfying $n=tb(\mathcal{K})$ and $ad(\mathcal{K})=\overline{ad}(K)$. We present a front diagram of $\mathcal{K}$ by a Legendrian tangle $\mathcal{T}_{\mathcal{K}}$ as in the left diagram of Figure~\ref{fig:tangle_legendrian_K}. We then draw the front diagram consisting of three copies of $\mathcal{K}$ each of which is slightly shifted to the vertical direction (cf.\ upper Legendrian pictures in Figure~\ref{fig:Legendrian_left_twists}). We present the resulting front diagram by a Legendrian tangle $\mathcal{T}_{\mathcal{K}}^3$ as in the right diagram. Using this tangle, we obtain the Legendrian representatives of $P_{n,m}(K)$ and $Q_{n,m}(K)$ in Figures~\ref{fig:Legendrian_P_nm} and \ref{fig:Legendrian_Q_nm}, respectively. Here the left-handed full twists in the boxes denote the Legendrian versions shown in Figure~\ref{fig:Legendrian_left_twists}. 

\begin{figure}[h!]
\begin{center}
\includegraphics[width=2.8in]{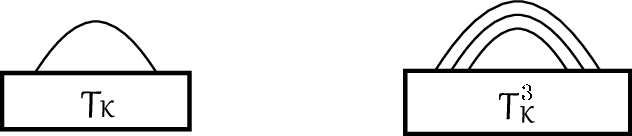}
\caption{The left is a front diagram of $\mathcal{K}$. The right is a front diagram consisting of three copies of $\mathcal{K}$ slightly shifted to the vertical direction.}
\label{fig:tangle_legendrian_K}
\end{center}
\end{figure}

\begin{figure}[h!]
\begin{center}
\includegraphics[width=3.0in]{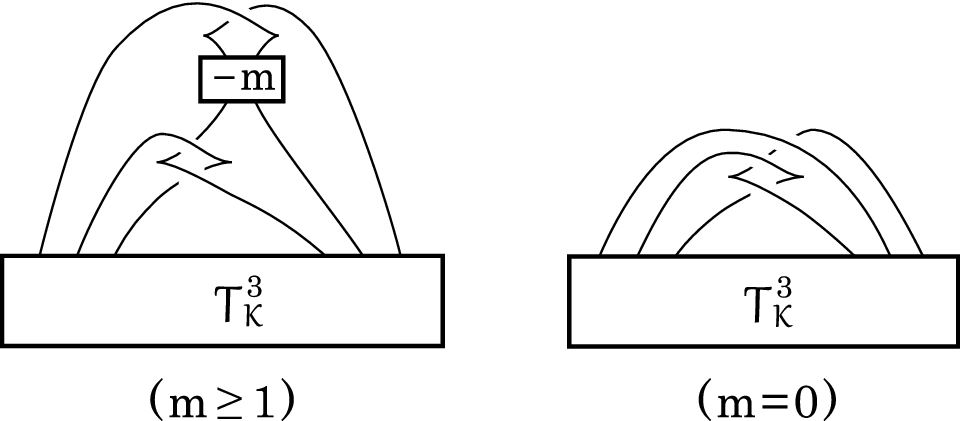}
\caption{A Legendrian representative of $P_{n,m}(K)$ $(m\geq 0)$}
\label{fig:Legendrian_P_nm}
\end{center}
\end{figure}

\begin{figure}[h!]
\begin{center}
\includegraphics[width=1.8in]{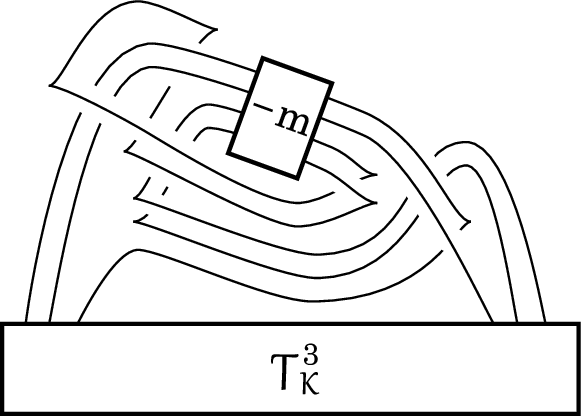}
\caption{A Legendrian representative of $Q_{n,m}(K)$ $(m\geq 0)$}
\label{fig:Legendrian_Q_nm}
\end{center}
\end{figure}

Next we determine the 4-genus and the $n$-shake genus of $P_{n,m}(K)$. By counting the writhe and the number of left cusps of the front diagram, one can easily check that the Legendrian representative of $P_{n,m}(K)$ satisfies $tb=tb(\mathcal{K})+2$ and $|r|=|r(\mathcal{K})|$. By adding a zig-zag to this front diagram, we get a Legendrian representative of $P_{n,m}(K)$ with $tb=n+1$ and $|r|=|r(\mathcal{K})|+1$. Applying the adjunction inequality, we see 
\begin{equation*}
n \,+\, |r(\mathcal{K})|+1\leq 2g_s^{(n)}(P_{n,m}(K))-2. 
\end{equation*}
Since $n-1+|r(\mathcal{K})|=\overline{ad}(K)$ and $2g_4(K)=\overline{ad}(K)+2$, this inequality implies $g_s^{(n)}(P_{n,m}(K))\geq g_4(K)+1$. On the other hand, we see that $P_{n,m}(K)$ becomes isotopic to $\mathcal{K}$ after changing the lowest crossing shown in Figure~\ref{fig:Legendrian_P_nm}. This implies that $P_{n,m}(K)$ bounds a surface of genus $g_4(\mathcal{K})+1$ in $D^4$. Therefore, we obtain 
\begin{equation*}
g_s^{(n)}(P_{n,m}(K))=g_4(P_{n,m}(K))= g_4(K)+1. 
\end{equation*}
Since $Q_{n,m}(K)$ is concordant to $K$ (Remark~\ref{thm:torus:remark}), we also see 
\begin{equation*}
g_4(Q_{n,m}(K))=g_4(K). 
\end{equation*}

Here we distinguish smooth structures on $P_{n,m}^{(n)}(K)$ and $Q_{n,m}^{(n)}(K)$. By Lemma~\ref{proof:lem:cork}, the 4-manifold $P_{n,m}^{(n)}(K)$ is homeomorphic to $Q_{n,m}^{(n)}(K)$. On the other hand, the above arguments show
\begin{equation*}
g_s^{(n)}(P_{n,m}(K))=g_4(K)+1>g_4(Q_{n,m}(K))\geq g_s^{(n)}(Q_{n,m}(K)).
\end{equation*}
Consequently, $g_s^{(n)}(P_{n,m}(K))\neq g_s^{(n)}(Q_{n,m}(K))$. Therefore, it follows from the definition of the $n$-shake genus that $P_{n,m}^{(n)}(K)$ is not diffeomorphic to $Q_{n,m}^{(n)}(K)$. 

Lastly we check existence of Stein structures on $P_{n,m}^{(n)}(K)$ and $Q_{n,m}^{(n)}(K)$. By using the left and the lower right diagrams of $P_{n,m}^{(n)}(K)$ and $Q_{n,m}^{(n)}(K)$ in Figure~\ref{fig:Q_nm_new_description} and the Stein handlebody diagram of $V_n$ in Figure~\ref{fig:Legendrian_V_n}, we can easily realize $P_{n,m}^{(n)}(K)$ (resp.\ $Q_{n,m}^{(n)}(K)$) as a Stein handlebody for $n\leq \overline{tb}(K)$ (resp.\ $n\leq \overline{tb}(K)-1$). Hence, according to \cite{E1, G1}, these 4-manifolds admit Stein structures. 
\end{proof}

\begin{remark}[Knots $P_{n,m}(K)$ and $Q_{n,m}(K)$]\label{rem:knots}
Fix integers $n,m$ with $m\geq 0$. For a knot $K$ satisfying the assumption of Theorem~\ref{thm:satellite:detail}, the knots $P_{n,m}(K)$ and $Q_{n,m}(K)$ satisfy the conditions below, as seen from the proof. 
\begin{align*}
2g_4(P_{n,m}(K))&=\overline{ad}(P_{n,m}(K))+2,\quad \widehat{tb}(P_{n,m}(K))\geq n+2.\\
2g_4(Q_{n,m}(K))&=\overline{ad}(Q_{n,m}(K))+2, \quad \widehat{tb}(Q_{n,m}(K))\geq n. 
\end{align*}
\end{remark}


\begin{remark}[The $n$-shake genera]\label{cor:tau;shake1}Fix integers $n,m$ with $m\geq 0$. Assume that a knot $K$ satisfies the assumption of Theorem~\ref{thm:satellite:detail}. Then the $n$-shake genera of $P_{n,m}(K)$ and $Q_{n,m}(K)$ are given as follows. 
\begin{align*}g_s^{(n)}(P_{n,m}(K))=g_4(K)+1,\quad  \text{if  $n\leq \widehat{tb}(K)$.}\\
g_s^{(n)}(Q_{n,m}(K))=g_4(K),\quad  \text{if $n\leq \widehat{tb}(K)-1$.}
\end{align*}
The former equality is obvious from the proof of Theorem~\ref{thm:satellite:detail}. The latter can be seen as follows. It is easy to realize the lower right handlebody in Figure~\ref{fig:Q_nm_new_description} as a Stein handlebody, if $n\leq \widehat{tb}(K)-1$. Therefore, this manifold admits a Stein structure. Applying the adjunction inequality for Stein 4-manifolds (\cite{AM, LM2}. cf.\ \cite{GS, OS1}) to this 4-manifold, we obtain the above equality. 
\end{remark}
\begin{remark}[The knot concordance invariants $\tau$ and $s$]\label{cor:tau;shake2}Let $n,m,K$ be as in Remark~\ref{cor:tau;shake1}. Then $\tau$ and $s$ of $P_{n,m}(K)$ and $Q_{n,m}(K)$ are given as follows. 
\begin{gather*}
2\tau(P_{n,m}(K))=s(P_{n,m}(K))=2g_4(P_{n,m}(K))=\overline{ad}(K)+4, \\
2\tau(Q_{n,m}(K))=s(Q_{n,m}(K))=2g_4(Q_{n,m}(K))=\overline{ad}(K)+2.
\end{gather*}
These are straightforward from Remark~\ref{rem:knots} and the following inequalities for an arbitrary knot $L$ in $S^3$ (\cite{Pl, Pl_Kho, Shu}. cf.\ \cite{CFHH}). 
\begin{equation*}
 \overline{ad}(L)\leq 2\tau(L)-2\leq 2g_4(L)-2,\qquad 
 \overline{ad}(L)\leq s(L)-2\leq 2g_4(L)-2.
\end{equation*}
\end{remark}
\begin{remark}$(1)$ The $n$-shake genus (and thus 4-genus) of $P_{n,m}(K)$ can be obtained also by applying the original argument of Akbulut and the author~\cite{AY5}, that is, by applying the adjunction inequality to the (Stein version of) left handlebody in Figure~\ref{fig:Q_nm_new_description}. This method clearly works for many other satellite knots. The argument of this paper is a simplification of this method by canceling the 1-handle.\smallskip\\
$(2)$ As pointed out by Ray, the above values of $\widehat{tb}$, the $n$-shake genus, the 4-genus, $\tau$ and $s$ for $P_{n,m}(K)$ follow from results of Cochran-Ray \cite{CR}, which were obtained by similar arguments. 
\end{remark}

We give a simple example in the case where the framing is $0$. 
\begin{example}We consider the right-handed trefoil knot $T_{2,3}$. This knot satisfies the assumption of Theorem~\ref{thm:satellite:detail} and $\widehat{tb}(T_{2,3})=\overline{tb}(T_{2,3})=1$. Hence by Theorem~\ref{thm:satellite:detail}, the knots $P_{0,0}(T_{2,3})$ and $Q_{0,0}(T_{2,3})$ are non-concordant for any orientations, and their $0$-surgeries yield the same 3-manifold. Furthermore, these knots with $0$-framings represent Stein 4-manifolds which are homeomorphic but non-diffeomorphic to each other. Figure~\ref{fig:example} gives diagrams of these knots. 
\end{example}
\begin{figure}[h!]
\begin{center}
\includegraphics[width=3.8in]{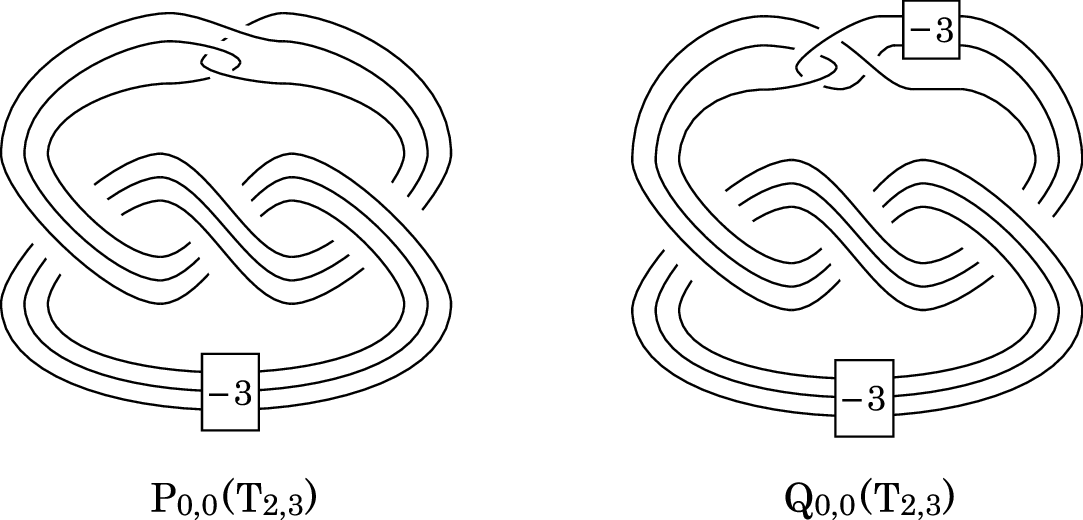}
\caption{$P_{0,0}(T_{2,3})$ and $Q_{0,0}(T_{2,3})$}
\label{fig:example}
\end{center}
\end{figure}

Now we can easily prove our main results. 

\begin{proof}[Proof of Theorems~\ref{intro:thm:knot}, \ref{intro:thm:satellite_knot} and \ref{thm:intro}]These are obvious from Theorem~\ref{thm:satellite:detail} and Remark~\ref{thm:torus:remark}. Note that we can distinguish the pairs of knots by comparing their 4-genera. 
\end{proof}

\begin{proof}[Proof of Corollary~\ref{intro:cor:4-dim satellite}]This is obvious from Lemma~\ref{proof:lem:cork}, Theorem~\ref{thm:satellite:detail} and Remark~\ref{thm:torus:remark}.
\end{proof}


Regarding Stein structures, Theorem~\ref{thm:satellite:detail} also gives different types of exotic framed knots. 
\begin{corollary}\label{cor:Stein_non-Stein}For each odd integer $n\geq -1$, there exists a pair of $n$-framed knots representing a pair of homeomorphic but non-diffeomorphic 4-manifolds such that one 4-manifold admits a Stein structure, but the other 4-manifold admits no Stein structure. Furthermore, there exist infinitely many distinct pairs of such $n$-framed knots. 
\end{corollary}
\begin{proof} We use the following proposition. 
\begin{proposition}\label{prop:cor:Stein_non-Stein}
Fix an integer $m\geq 0$. For an odd integer $n\geq -1$, assume that a knot $K$ satisfies $n=\overline{tb}(K)=2g_4(K)-1$ $($e.g.\ the torus knot $T_{2,2n+1}$$)$. Then the 4-manifolds $P_{n,m}^{(n)}(K)$ and $Q_{n,m}^{(n)}(K)$ are homeomorphic but not diffeomorphic to each other. Furthermore, $P_{n,m}^{(n)}(K)$ admits a Stein structure, but $Q_{n,m}^{(n)}(K)$ does not admit any Stein structure. 
\end{proposition}
\begin{proof}[Proof of Proposition~\ref{prop:cor:Stein_non-Stein}]
The former claim follows from Theorem~\ref{thm:satellite:detail}. The proof of Theorem~\ref{thm:satellite:detail} shows that $P_{n,m}^{(n)}(K)$ admits a Stein structure. By the inequality 
\begin{equation*}
g_s^{(n)}(Q_{n,m}(K))\leq g_4(Q_{n,m}(K))=g_4(K)
\end{equation*}
and the assumption on $K$, we see 
\begin{equation*}
2g_s^{(n)}(Q_{n,m}(K))-2<n. 
\end{equation*}
Since $n$ is the self-intersection number of the generator of $H_2(Q_{n,m}^{(n)}(K);\mathbb{Z})$, the adjunction inequality for Stein 4-manifolds (\cite{AM, LM2}. cf.\ \cite{GS, OS1}) guarantees that $Q_{n,m}^{(n)}(K)$ does not admit any Stein structure. 
\end{proof}
Now let $n,m$ be as in the above proposition, and let $K_1, K_2, \dots $ be infinitely many distinct knots in $S^3$ satisfying $n=\overline{tb}(K_i)=2g_4(K_i)-1$ for each $i$. Such an infinite family can be constructed, for example, as follows. Let $J$ be a non-trivial ribbon knot with $\overline{tb}(J)=-1$ (e.g.\ the knot $K_m$ $(m\leq -1)$ in \cite{Y16}), and let $K_i$ be the connected sum of the torus knot $T_{2, 2n+1}$ and $i-1$ copies of $J$. Then it is easy to see $n=\overline{tb}(K_i)=2g_4(K_i)-1$, and the unique prime factorization theorem of knots tells that $K_1, K_2, \dots $ are pairwise distinct. By Corollary in \cite{So}, we can easily show that infinitely many knots $P_{n,m}(K_1), P_{n,m}(K_2), \dots $ are pairwise distinct. The above proposition tells that each pair of $n$-framed knots $P_{n,m}(K_i)$ and $Q_{n,m}(K_i)$ satisfies the condition of the desired corollary, and thus we obtain infinitely many distinct such pairs by varying $i$. This completes the proof of Corollary~\ref{cor:Stein_non-Stein}. 
\end{proof}

\begin{corollary}\label{both_non-Stein}For each integer $n$, there exist infinitely many distinct pairs of $n$-framed knots such that each pair represents a pair of homeomorphic but non-diffeomorphic 4-manifolds admitting no Stein structures. 
\end{corollary}
\begin{proof}Let $n, m$ be fixed integers with $m\geq 0$. Assume that a knot $K$ satisfies $2g_4(K)=\overline{ad}(K)+2$, $n\leq \widehat{tb}(K)$, and $n\leq \overline{tb}(K)-1$. We denote the mirror images of the knots $P_{n,m}(K)$ and $Q_{n,m}(K)$ by $\overline{P_{n,m}(K)}$ and $\overline{Q_{n,m}(K)}$. We here show that the 4-manifold represented by $\overline{P_{n,m}(K)}$ with $(-n)$-framing does not admit any Stein structure. Suppose, to the contrary, that this 4-manifold $X$ admits a Stein structure. Then the boundary connected sum $Z:=X\natural P_{n,m}^{(n)}(K)$ admits a Stein structure, since $P_{n,m}^{(n)}(K)$ admits a Stein structure. By a handle slide, we see that $Z$ contains an embedded 2-sphere representing a non-zero second homology class with the self-intersection number $0$. (Note that the connected sum $P_{n,m}(K)\#\overline{P_{n,m}(K)}$ is a ribbon knot.) This contradicts the adjunction inequality for Stein 4-manifolds (\cite{AM, LM2}. cf.\ \cite{GS, OS1}). Hence $X$ does not admit any Stein structure. The same argument clearly works for $\overline{Q_{n,m}(K)}$. Therefore, the knots $\overline{P_{n,m}(K)}$ and $\overline{Q_{n,m}(K)}$ with $(-n)$-framings represent non-Stein 4-manifolds. Since these non-Stein 4-manifolds are the 4-manifolds $P_{n,m}^{(n)}(K)$ and $Q_{n,m}^{(n)}(K)$ with the reverse orientations, Theorem~\ref{thm:satellite:detail} and Remark~\ref{thm:torus:remark} imply the claim by varying $K$.  
\end{proof}

One might ask whether our counterexamples to the Akbulut-Kirby conjecture are topologically concordant. We point out that there are infinitely many topologically concordant examples among them by an argument similar to \cite{CFHH, Ray}. 

\begin{corollary}There exists a pair of knots with the same $0$-surgery which are topologically concordant but smoothly non-concordant for any orientations. Furthermore, there exist infinitely many distinct pairs of such knots. 
\end{corollary}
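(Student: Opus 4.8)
The plan is to extract the required pairs from the family of Theorem~\ref{thm:satellite:detail} by feeding it a \emph{topologically slice} companion, and to obtain the topological concordance from Freedman's theorem, in the spirit of \cite{CFHH}. Fix $n=0$ and $m\geq 0$. First I would choose a knot $K$ satisfying the hypotheses of Theorem~\ref{thm:satellite:detail} for $n=0$ --- that is, $2g_4(K)=\overline{ad}(K)+2$ and $0\leq\widehat{tb}(K)$ --- which in addition has trivial Alexander polynomial $\Delta_K\equiv 1$, so that $K$ is topologically slice by Freedman \cite{Fr}. A suitable positively clasped Whitehead double of the right handed trefoil is the natural candidate, since $\Delta\equiv 1$, $g_4=1$, $\tau=1$ force $\overline{ad}=2\tau-2=0$, and a Legendrian front as in Remark~\ref{thm:torus:remark} is used to check $\widehat{tb}\geq 0$. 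For such a $K$, Theorem~\ref{thm:satellite:detail} already yields that $P_{0,m}(K)$ and $Q_{0,m}(K)$ are smoothly non-concordant for any orientations and share the same $0$-surgery, so it remains only to produce the topological concordance.

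For the topological side I would show that \emph{both} knots are topologically slice. Since $Q_{0,m}(K)$ is smoothly concordant to $K$ by Remark~\ref{thm:torus:remark}(3) and $K$ is topologically slice, $Q_{0,m}(K)$ is topologically slice. For $P_{0,m}(K)$ I would use the satellite (Torres) formula for Alexander polynomials: the pattern $P_{0,m}$ has winding number one --- read off from Figure~\ref{fig:pattern_P_nm} and consistent with the growth $g_4(P_{0,m}(K))=g_4(K)+1$, which rules out winding number zero --- so that $\Delta_{P_{0,m}(K)}(t)=\Delta_{P_{0,m}(U)}(t)\cdot\Delta_K(t)$, where $U$ is the unknot. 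As $\Delta_K\equiv 1$, this reduces the problem to the single input $\Delta_{P_{0,m}(U)}\equiv 1$; granting it, $P_{0,m}(K)$ has trivial Alexander polynomial and hence is topologically slice by Freedman \cite{Fr}. With both knots topologically slice they are topologically concordant (each to the unknot, hence to one another), and since topological sliceness is preserved under reversal and mirroring, this holds for any orientations.

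To obtain infinitely many distinct pairs I would iterate, as in Remark~\ref{thm:torus:remark}(2): set $K_0=K$ and $K_{i+1}=P_{0,m}(K_i)$. Each $K_i$ again satisfies the hypotheses of Theorem~\ref{thm:satellite:detail}, and an induction using $\Delta_{K_{i+1}}=\Delta_{P_{0,m}(U)}\cdot\Delta_{K_i}\equiv 1$ keeps every $K_i$ topologically slice; thus every pair $(P_{0,m}(K_i),Q_{0,m}(K_i))$ is topologically concordant and smoothly non-concordant with the same $0$-surgery. These pairs are mutually distinct because $g_4(P_{0,m}(K_i))=g_4(K_i)+1=g_4(K_0)+1+i$ is strictly increasing. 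The main obstacle is precisely the input $\Delta_{P_{0,m}(U)}\equiv 1$: this is a concrete computation on the pattern of Figure~\ref{fig:pattern_P_nm}, and it is exactly where the topological category is essential, since $P_{0,m}(U)$ has smooth $4$-genus one and so is not smoothly slice. A secondary, more routine obstacle is checking that the concrete topologically slice companion (the chosen Whitehead double) genuinely satisfies $2g_4=\overline{ad}+2$ and $\widehat{tb}\geq 0$ via its Legendrian realization.
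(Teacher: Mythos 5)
Your overall skeleton matches the paper's: choose a topologically slice companion $K$ (a Whitehead double of a positive torus knot) satisfying the hypotheses of Theorem~\ref{thm:satellite:detail} for $n=0$, show that both $P_{0,m}(K)$ and $Q_{0,m}(K)$ are topologically slice and hence topologically concordant to each other, invoke Theorem~\ref{thm:satellite:detail} for the smooth non-concordance and the common $0$-surgery, and iterate via Remark~\ref{thm:torus:remark}, distinguishing the pairs by their $4$-genera. Where you genuinely diverge is the mechanism for the topological sliceness of $P_{0,m}(K)$: the paper invokes the fact that satellite operators descend to topological concordance classes (\cite{CDR}), so that $P_{0,m}(K)$ is topologically concordant to $P_{0,m}(U)$ ($U$ the unknot), reducing everything to the sliceness of the pattern knots themselves; you instead route through the Torres formula for Alexander polynomials of winding-number-one satellites and Freedman's theorem. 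Your route buys independence from \cite{CDR}, but at the cost of needing two facts about the pattern that the paper's route does not: that the winding number of $P_{0,m}$ is one, and that $\Delta_{P_{0,m}(U)}\equiv 1$.

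Both of those inputs are left unsettled in your write-up, and one of your justifications is wrong. The claim that the genus growth $g_4(P_{0,m}(K))=g_4(K)+1$ ``rules out winding number zero'' is false: winding-number-zero patterns such as Whitehead doubles routinely raise the smooth $4$-genus by one (your own companion $K$ has $g_4(K)=1=g_4(U)+1$), so the winding number must simply be read off the diagram in Figure~\ref{fig:pattern_P_nm}. More seriously, $\Delta_{P_{0,m}(U)}\equiv 1$ is the load-bearing input of your topological step, not a side computation; if $P_{0,m}(U)$ had nontrivial Alexander polynomial the argument would collapse, and you offer no verification. The paper's use of \cite{CDR} sidesteps the Alexander polynomial entirely: it needs only that $P_{0,m}(U)$ and $Q_{0,m}(U)$ are (topologically) slice, the latter being recorded in Remark~\ref{thm:torus:remark}(3). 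Finally, a minor imprecision: $\tau(K)=1$ only yields $\overline{ad}(K)\leq 2\tau(K)-2=0$, so the equality $2g_4(K)=\overline{ad}(K)+2$ and the bound $\widehat{tb}(K)\geq 0$ still must be witnessed by an explicit Legendrian representative with $tb=1$ and $r=0$ (as in \cite{AM}), which you gesture at but should state as a required verification rather than a consequence of $\tau$.
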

\begin{proof}Let $m\geq 0$ be a fixed integer, and let $K$ be a topologically slice knot with $2g_4(K)=\overline{ad}(K)+2$ and $0\leq \widehat{tb}(K)$. For example, the untwisted Whitehead double of a positive torus knot satisfies this condition (e.g.\ \cite{AM}). Note that the Whitehead double of a knot is topologically slice as is well-known. It is known that if two knots are topologically concordant, then the images of them by a satellite map are also topologically concordant (e.g.\ \cite{CDR}). Hence $P_{0,m}(K)$ and $Q_{0,m}(K)$ are topologically slice knots. Therefore, by Theorem~\ref{thm:satellite:detail}, the pair of knots $P_{0,m}(K)$ and $Q_{0,m}(K)$ satisfies the claim. Remark~\ref{thm:torus:remark} thus tells that we obtain infinitely many distinct such pairs by iterating this construction. 
\end{proof}

Let us recall that each of our counterexamples to the Akbulut-Kirby conjecture is a pair of knots $P_{0,m}(K)$ and $Q_{0,m}(K)$ $(m\geq 0)$, where $K$ is an arbitrary knot satisfying $2g_4(K)=\overline{ad}(K)+2$ and $0\leq \widehat{tb}(K)$. Levine pointed out to the author that, at least in the $m=0$ case, the condition on $K$ can be relaxed by combining a result of his paper~\cite{Lev}, namely, the corollary below holds. 

\begin{corollary}\label{cor:epsilon}Assume that a knot $K$ satisfies either $\tau(K)>0$ or $\epsilon(K)=-1$. Then the knots $P_{0,0}(K)$ and $Q_{0,0}(K)$ have the same $0$-surgery but are not concordant for any orientations. 
\end{corollary}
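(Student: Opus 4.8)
The plan is to separate the two assertions, since the ``same $0$-surgery'' claim holds for \emph{every} knot $K$ with no hypothesis and should be disposed of first. By the remark following Lemma~\ref{proof:lem:cork} (via Corollary~\ref{cor:new_description}), the $4$-manifold $P_{0,0}^{(0)}(K)$ is obtained from $Q_{0,0}^{(0)}(K)$ by a cork twist along $(V_0,g_0)$. A cork twist removes a contractible piece and reglues it by a boundary involution entirely in the interior, so it leaves the ambient boundary unchanged. The boundary of $Q_{0,0}^{(0)}(K)$ (resp.\ $P_{0,0}^{(0)}(K)$) is precisely the $0$-surgery on $Q_{0,0}(K)$ (resp.\ $P_{0,0}(K)$), because attaching a $0$-framed $2$-handle to $D^4$ along a knot yields its $0$-surgery on the boundary. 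Hence the two $0$-surgeries are diffeomorphic for any $K$, and in particular under the hypothesis of the corollary.

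For the non-concordance I would reduce to the comparison of a single concordance invariant. By Remark~\ref{thm:torus:remark}(3) the knot $Q_{0,0}(K)$ is concordant to $K$, so every concordance invariant of $Q_{0,0}(K)$ equals that of $K$; in particular $\tau(Q_{0,0}(K))=\tau(K)$. Since $\tau$ is unchanged when the string orientation of a knot is reversed, a single inequality $\tau(P_{0,0}(K))\neq\tau(K)$ rules out concordance between $P_{0,0}(K)$ and $Q_{0,0}(K)$ for all choices of orientations at once, which is exactly what ``not concordant for any orientations'' requires (cf.\ Conjecture~\ref{conjecture:AT} and \cite{KL}). Thus the problem becomes: show $\tau(P_{0,0}(K))>\tau(K)$ whenever $\tau(K)>0$ or $\epsilon(K)=-1$.

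To carry this out I would invoke Levine's analysis \cite{Lev} of the effect of the (winding number one) satellite operation $P_{0,0}$ on $\tau$, combined with Hom's invariant $\epsilon$ \cite{Hom}, whose trichotomy $\epsilon(K)\in\{-1,0,1\}$ is exactly the input that controls satellite $\tau$-formulas. The value I would extract is
\begin{equation*}
\tau(P_{0,0}(K))=\tau(K)+1\qquad\text{whenever }\tau(K)>0\text{ or }\epsilon(K)=-1,
\end{equation*}
so that $\tau(P_{0,0}(K))=\tau(K)+1\neq\tau(K)=\tau(Q_{0,0}(K))$. This contradicts concordance for any orientations, and together with the $0$-surgery statement above completes the proof.

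The main obstacle is the $\tau$-computation of the previous paragraph: establishing the \emph{strict} increase $\tau(P_{0,0}(K))>\tau(K)$ precisely on the relaxed locus where $\tau(K)>0$ or $\epsilon(K)=-1$, rather than merely the general bound $\tau(P_{0,0}(K))\geq\tau(K)$. This is where Levine's satellite formula and the $\epsilon$ case-analysis do the genuine work, and it is exactly the step that allows one to drop the Legendrian hypothesis $2g_4(K)=\overline{ad}(K)+2$, $0\le\widehat{tb}(K)$ used in Theorem~\ref{thm:satellite:detail}. Everything else—the unchanged $0$-surgery from the cork-twist description and the orientation bookkeeping from the orientation-insensitivity of $\tau$—follows formally once the value of $\tau(P_{0,0}(K))$ is in hand.
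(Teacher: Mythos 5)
Your proposal is correct and follows essentially the same route as the paper: the same $0$-surgery comes from Lemma~\ref{proof:lem:cork} (equivalently, the cork-twist description leaving the boundary unchanged), and non-concordance comes from Levine's formula $\tau(P_{0,0}(K))=\tau(K)+1$ under the stated hypothesis, combined with $Q_{0,0}(K)$ being concordant to $K$ and the orientation-insensitivity of $\tau$. The paper's proof is just a terser version of exactly this argument.
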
 
\begin{proof}A result of Levine \cite{Lev} shows $\tau(P_{0,0}(K))=\tau(K)+1$ (and $\epsilon(P_{0,0}(K))=1$) under the above assumption. Hence $P_{0,0}(K)$ is not concordant to $K$ for any orientations. Since $Q_{0,0}(K)$ is concordant to $K$ (Remark~\ref{thm:torus:remark}), this fact and Lemma~\ref{proof:lem:cork} shows the claim. 
\end{proof}
We remark that this corollary enlarges the class of $K$ from the class given by Theorem~\ref{thm:satellite:detail}. For example, according to \cite{Hom}, a negative torus knot $T_{p,-q}$ $(p,q>1)$ satisfies $\epsilon=-1$ and hence the assumption of this corollary, though it does not satisfy the aforementioned condition given by Theorem~\ref{thm:satellite:detail}. 

\begin{proof}[Proof of Corollary~\ref{intro:cor:tau}]This is obvious from Theorem~\ref{thm:satellite:detail}, Remarks~\ref{thm:torus:remark} and \ref{cor:tau;shake2}, and the proof of Corollary~\ref{cor:epsilon}.
\end{proof}

\begin{remark}[Extension of the main construction]
For simplicity, we defined the cork $(V_m,g_m)$ and the pattern knots $P_{n,m}$ and $Q_{n,m}$ in the case where $m$ is an integer. Clearly, we can extend these definitions to the case where $m$ is a half-integer, and we can similarly prove our results for any positive half-integer $m$. We remark that $(V_{\frac{1}{2}}, g_{\frac{1}{2}})$ is the same cork as $(\overline{W}_1, \overline{f}_1)$ in \cite{AY1}. As pointed out by a referee, our cork $(V_m,g_m)$ coincides with the cork $(\overline{C}_h,\tau)$ $(h=-m)$ earlier introduced by Auckly-Kim-Melvin-Ruberman \cite{AKMR}. We remark that many more examples can be constructed by using Proposition~\ref{prop:hook}, which is introduced in the next section. 
\end{remark}

\section{Dot-zero surgery and hook surgery}\label{sec:hook_dot}
In this section, we first discuss a certain surgery which we call a dot-zero surgery. This surgery is a generalization of cork twists along Mazur type corks. Specifically we give a sufficient condition on a link in $S^3$ such that any dot-zero surgery induced from the link does not change the smooth structure of a 4-manifold. Applying this result, we prove Theorems~\ref{intro:thm:noncork} and \ref{intro:thm:cork_noncork}. We next introduce a hook surgery as a formulation of the new description of cork twists obtained in Section~\ref{sec:new_description}. We then give a sufficient condition that a dot-zero surgery (including Mazur type cork twists) admits a hook surgery description.

\subsection{Dot-zero surgery}
For an ordered link $L$ of two unknotted components in $S^3$, we denote the link $L$ with the reverse order by $\widetilde{L}$. Let $X_L$ be the 4-dimensional handlebody obtained from $L$ by putting a dot on the first component and a $0$ on top of the second component. Figure~\ref{fig:hook} is a diagram of this handlebody, where the link $L$ is described using a tangle $T_L$. 
\begin{figure}[h!]
\begin{center}
\includegraphics[width=1.3in]{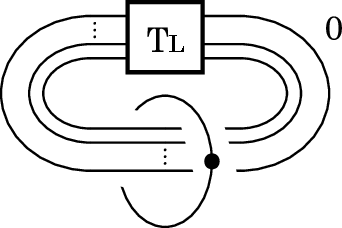}
\caption{$X_L$}
\label{fig:hook}
\end{center}
\end{figure}

By exchanging the dot and zero, we obtain the 4-manifold $X_{\widetilde{L}}$. Since $X_{\widetilde{L}}$ is obtained from $X_L$ by surgering $S^1\times D^3$ to $D^2\times S^2$ and then surgering the other $D^2\times S^2$ to $S^1\times D^3$, this operation induces a diffeomorphism $\varphi_L:\partial X_L\to \partial X_{\widetilde{L}}$. 
For a smooth 4-manifold $Z$ containing $X_L$ as a submanifold, remove $X_L$ and glue $X_{\widetilde{L}}$ back via the gluing map $\varphi_L$. We say that the resulting 4-manifold is obtained from $Z$ by a \textit{dot-zero surgery} along $(X_L,\varphi_L)$. 

It is known that many important surgeries are essentially dot-zero surgeries along (often Stein) 4-manifolds. Logarithmic transform, Fintushel-Stern's rational blowdown, cork twists and plug twists are such examples (cf.\ \cite{A_book, AY1, GS}). Therefore, characterizing a link which can (or cannot) alter diffeomorphism types preserving homeomorphism types is a natural problem. The characterization is also helpful to find a diffeomorphism between complicated handlebodies.

Here we give a sufficient condition on a link such that any dot-zero surgery induced from the link does not change the diffeomorphism type of a 4-manifold. 

\begin{proposition}\label{prop:dot-zero}Suppose that an ordered link $L$ of two unknotted components in $S^3$ becomes a trivial link after a single crossing change between two components in a diagram of $L$. Then the diffeomorphism $\varphi_L: \partial X_L\to \partial X_{\widetilde{L}}$ extends to a diffeomorphism $X_L\to X_{\widetilde{L}}$. Consequently, any dot-zero surgery along $(X_L,\varphi_L)$ does not change the diffeomorphism type of a 4-manifold. 
\end{proposition}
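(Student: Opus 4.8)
The plan is to reduce the entire statement to a single extension claim: that $\varphi_L$ extends to a diffeomorphism $X_L \to X_{\widetilde L}$. Granting this, the ``Consequently'' clause is immediate. Write $Z = Z_0 \cup_{\partial X_L} X_L$ for a $4$-manifold containing $X_L$, and let $Z' = Z_0 \cup_{\varphi_L} X_{\widetilde L}$ be the result of the dot-zero surgery. If $\Phi : X_L \to X_{\widetilde L}$ is a diffeomorphism with $\Phi|_{\partial X_L} = \varphi_L$, then $\Phi$ together with the identity on $Z_0$ glues to a diffeomorphism $Z \to Z'$, since the two boundary identifications agree by construction. Thus the whole of the proposition is the extension claim, and I would state and prove only that.

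To produce $\Phi$, I would argue directly on handlebody diagrams, exactly as in the proof of Theorem~\ref{thm:boundary_V_n^*}. Starting from the diagram of $X_L$ (a dotted unknot for the $1$-handle and a $0$-framed unknot for the $2$-handle, interacting through the tangle $T_L$), the goal is to transform it, through a sequence of moves each realized by a diffeomorphism of the underlying $4$-manifold, into the diagram of $X_{\widetilde L}$, in which the roles of the two components are exchanged. The hypothesis that the associated link $L'$ is the trivial link is precisely what drives this process: triviality of $L'$ should let me isotope the attaching circle of the $2$-handle into a standard position relative to the belt region of the $1$-handle, after which the dot-zero exchange is realized by an isotopy together with handle slides rather than by a genuine change of smooth structure. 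Concretely, I would first convert the $1$-handle notation as needed, use the unknotting of $L'$ to bring $T_L$ into a symmetric form, and then carry out the exchange step by step, in the same spirit as the passage from the second to the last diagram in Figure~\ref{fig:new_description_proof}.

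Throughout, I would keep track of the induced map on $\partial X_L$. Each diagrammatic move is induced from a diffeomorphism between the obvious $4$-manifolds and so restricts to a concrete self-diffeomorphism of the boundary; the composition of all these boundary maps must then be checked to agree with $\varphi_L$, the surgery diffeomorphism defined by surgering $S^1 \times D^3$ to $D^2 \times S^2$ and back. Since the moves available when $L'$ is trivial implement exactly this exchange, the bookkeeping should close up.

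The hard part will be Steps two and three: translating the purely diagrammatic hypothesis ``$L'$ is the trivial link'' into the explicit isotopy of $T_L$ that makes the $2$-handle slidable and cancellable in the required symmetric fashion, and then verifying that the resulting composite boundary map is $\varphi_L$ on the nose rather than merely \emph{some} self-diffeomorphism of $\partial X_L$. The subtlety is that triviality of $L'$ is a statement about a closure of $T_L$ in $S^3$, whereas what I ultimately need is control of the $2$-handle's attaching circle inside the boundary $S^1 \times S^2$ of the $1$-handlebody; bridging these two pictures, and ensuring every handle slide is compatible with the framing data recorded by the $0$-framings, is where the genuine work lies. The contractibility and linking-number hypotheses present in the hook setting are \emph{not} assumed here, so I would also take care that the argument uses only the triviality of $L'$ and the unknottedness of the two components.
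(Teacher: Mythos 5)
Your reduction of the ``Consequently'' clause to the extension claim is correct, and your overall strategy for the extension claim---decompose $\varphi_L$ into diagrammatic moves each induced by a diffeomorphism of the underlying $4$-manifolds, with triviality of $L'$ as the enabling hypothesis---is the same strategy the paper follows via the decomposition in Figure~\ref{fig:diffeo_dot_zero}. But the proposal stops at the point where the proof actually has to happen: you yourself identify ``translating the hypothesis that $L'$ is trivial into the required isotopy'' and ``verifying that the composite boundary map is $\varphi_L$ on the nose'' as the genuine work, and you offer only that ``the bookkeeping should close up.'' That is a gap, not a proof, and it is compounded by the fact that you have set yourself a stricter verification task than the one the paper actually performs.

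The missing idea is the paper's use of the fact that any self-diffeomorphism of $S^3$ extends to a self-diffeomorphism of $D^4$ (Cerf). The decomposition of $\varphi_L$ is arranged so that, once the triviality of $L'$ has been used to cancel the handles, the only piece of $\varphi_L$ that is not manifestly induced by an obvious $4$-manifold diffeomorphism is a self-diffeomorphism of $S^3=\partial D^4$; this residual piece extends over the $0$-handle by the quoted fact, and that is what closes the argument. In other words, one does not need the composite of obviously-extendable moves to agree with $\varphi_L$ exactly---it suffices that the discrepancy be supported on a separating $S^3$---and this is precisely the mechanism that lets the purely diagrammatic hypothesis on $L'$ (a condition on a closure of $T_L$ in $S^3$) be converted into an extension statement about $X_L$. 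To complete the proof along your lines you would need both to exhibit the explicit sequence of moves realizing $\varphi_L$ and to supply this $S^3$-to-$D^4$ extension step; without the latter, the demand for agreement ``on the nose'' has no route to being verified.
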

\begin{proof}We present $L$ as in the left diagram of Figure~\ref{fig:associated_link} using a tangle $T_L$. By the assumption, we may assume that the associated link $L'$ given by the right diagram is a trivial link. 

\begin{figure}[h!]
\begin{center}
\includegraphics[width=3.2in]{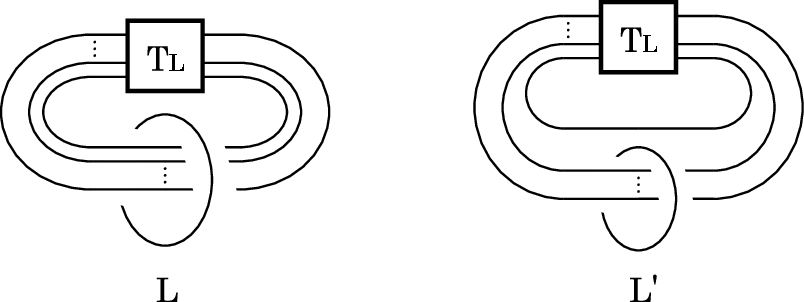}
\caption{Link $L$ and its associated trivial link $L'$, where $T_L$ is a tangle.}
\label{fig:associated_link}
\end{center}
\end{figure}

The diffeomorphism $\varphi_L: \partial X_L\to \partial X_{\widetilde{L}}$ is described in the upper side of Figure~\ref{fig:diffeo_dot_zero}. Since $L'$ is a trivial link, we can check that $\varphi_L$ is a composition of diffeomorphisms which extend to diffeomorphisms between 4-manifolds as shown in the figure. Hence the claim follows. Note that the dot-zero surgery corresponding to the Hopf link is a surgery along $D^4$, and that any self-diffeomorphism of $S^3$ extends to a self-diffeomorphism of $D^4$ as is well-known. 
\begin{figure}[h!]
\begin{center}
\includegraphics[width=4.4in]{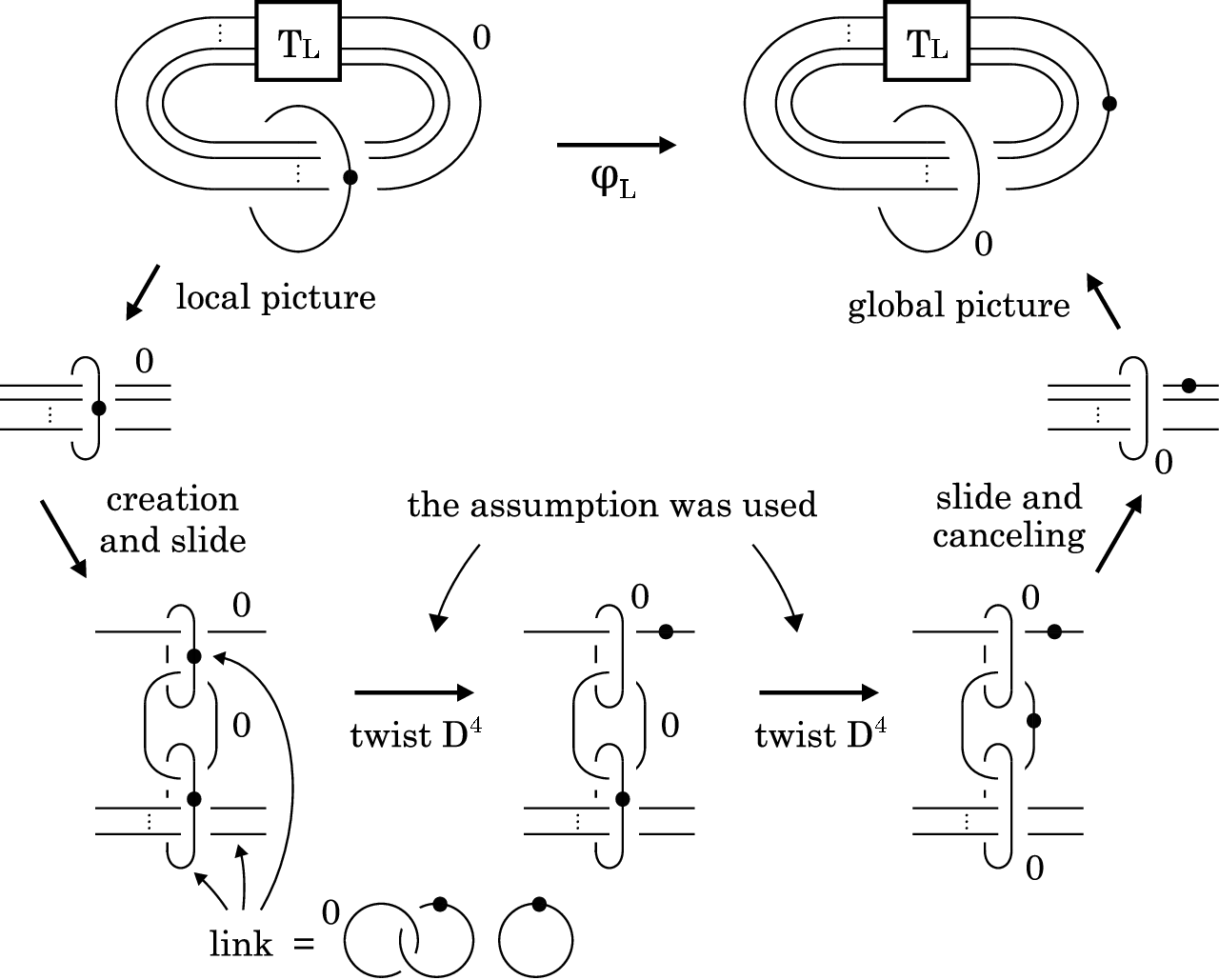}
\caption{Decomposition of the diffeomorphism $\varphi_L:\partial X_L\to \partial X_{\widetilde{L}}$}
\label{fig:diffeo_dot_zero}
\end{center}
\end{figure}
\end{proof}

Let us discuss an example coming from the new description of cork twists. Let $J_n$ be the link in $S^3$ given by the diagram of $V_n^*$ in Figure~\ref{fig:V_n_star}. Since $V_n^{*}$ $(n\geq 0)$ is diffeomorphic to the Stein 4-manifold $V_n$, and $(V_n, g_n)$ is a cork, one might expect that a dot-zero surgery along $(X_{J_n}, \varphi_{J_n})$ can alter smooth structures of 4-manifolds. 
However, by the above proposition, any dot-zero surgery along $(X_{J_n}, \varphi_{J_n})$ does not change smooth structures. Hence we obtain the following corollary. 
\begin{corollary}For each integer $n$, the diffeomorphism $\varphi_{J_n}:\partial X_{J_n}\to \partial X_{\widetilde{J}_n}$ extends to a diffeomorphism $X_{J_n}\to X_{\widetilde{J}_n}$. Furthermore, $X_{J_n}$ admits a Stein structure for $n\geq 0$. 
\end{corollary}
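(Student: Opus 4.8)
The plan is to separate the corollary into its two assertions: the extension of $\varphi_{L_n}$, which I would derive from Proposition~\ref{prop:dot-zero}, and the existence of a Stein structure on $X_{L_n}$, which I would obtain essentially for free from the identification $X_{L_n}\cong V_n^{*}\cong V_n$ already established in Section~\ref{sec:new_description}.

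For the extension, I would first read off the ordered link $L_n$ presenting $V_n^{*}$: forgetting the dot and the $0$ in the left diagram of Figure~\ref{fig:V_n_star} leaves a two-component link, and I would check from that diagram that both components are unknots, so that $L_n$ satisfies the standing hypothesis of Proposition~\ref{prop:dot-zero} (an ordered link of two unknotted components). The entire content of the first assertion then reduces to forming the associated link $(L_n)'$ by the recipe of Figure~\ref{fig:associated_link} and verifying, by an explicit sequence of isotopies and handle slides in $S^{3}$, that $(L_n)'$ is the $2$-component trivial link. Granting this, Proposition~\ref{prop:dot-zero} immediately produces a diffeomorphism $X_{L_n}\to X_{\widetilde L_n}$ extending $\varphi_{L_n}$, so that the corresponding dot-zero surgery is smoothly trivial.

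The Stein assertion requires almost no new work. By construction $X_{L_n}$ is the handlebody $V_n^{*}$ of Figure~\ref{fig:V_n_star}, and the second bullet of Theorem~\ref{thm:boundary_V_n^*} supplies a diffeomorphism $V_n^{*}\cong V_n$. Since $V_n$ was realized as a Stein handlebody for every $n\geq 0$ through the Legendrian presentation of Figure~\ref{fig:Legendrian_V_n} (invoking \cite{E1,G1}), transporting that Stein structure across the diffeomorphism shows that $X_{L_n}$ admits a Stein structure for $n\geq 0$.

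I expect the main obstacle to be the purely diagrammatic verification that $(L_n)'$ is trivial. The delicate point is bookkeeping for the half-twist box in the $V_n^{*}$ diagram together with the clasp that makes $V_n^{*}$ a genuinely nonstandard presentation: when the two strands are recombined in the associated-link construction, the clasp must unravel and the $n$ twists must end up carried on strands that can be isotoped apart, so that no knotting or linking survives. I anticipate this demands care but introduces no new idea; it is precisely the concrete instance of checking the hypothesis of Proposition~\ref{prop:dot-zero}, and it is exactly what makes the corollary noteworthy, since $V_n^{*}$ is a Stein (hence a priori ``twist-capable'') piece whose dot-zero surgery is nevertheless smoothly trivial.
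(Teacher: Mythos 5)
Your proposal is correct and follows essentially the paper's own route: the extension of $\varphi_{L_n}$ is exactly Proposition~\ref{prop:dot-zero} applied to $L_n$ (after the diagrammatic check that the associated link $(L_n)'$ is trivial, which the paper leaves implicit), and the Stein structure is transported from $V_n$ through the identification $X_{L_n}=V_n^*\cong V_n$ supplied by Theorem~\ref{thm:boundary_V_n^*}. The only minor quibble is that verifying $(L_n)'$ is the two-component unlink is purely an isotopy of a link in $S^3$; no handle slides are involved at that stage.
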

We remark that the Hopf link has been the only known such example to the best of the author's knowledge. Since $X_{J_n}=V_n^*$ is diffeomorphic to $V_n$, and $V_n$ is a cork for each $n\geq 0$, any $J_n$ $(n\geq 0)$ is not isotopic to the Hopf link. Note that the 4-ball is not a cork. These links $J_n$'s are probably mutually non-isotopic links, but we do not pursue this point here. 

As a special case, we discuss cork twists along Mazur type corks. Assume that a (ordered) link $L$ of two unknotted components in $S^3$ is symmetric, i.e., there exists an involution on $S^3$ which exchanges the components of $L$. Then the link $\widetilde{L}$ coincides with $L$ (up to order), and the symmetry induces a diffeomorphism $X_L\to X_{\widetilde{L}}$. Composing the boundary diffeomorphism $\varphi_L^{-1}: \partial X_{\widetilde{L}}\to \partial X_L$ to the restriction of this diffeomorphism, we obtain an involution $\tau_L:\partial X_L\to \partial X_L$. Furthermore, if the linking number of $L$ is one, then $X_L$ is contractible. Hence $(X_L, \tau_L)$ is a candidate of a cork in this case. When $(X_L, \tau_L)$ is a cork, then it is called of \textit{Mazur type}. It is well-known that a mild condition on the handlebody $X_L$ guarantees that $(X_L, \tau_L)$ is a Mazur type cork (\cite{AKa12, AM}). On the other hand, the above proposition immediately gives a simple sufficient condition that $(X_L, \tau_L)$ is not a cork. 

\begin{corollary}\label{cor:dot-zero:cork}Suppose that a symmetric link $L$ of two unknotted components in $S^3$ becomes a trivial link after a single crossing change between two components in a diagram of $L$. Then the involution $\tau_L: \partial X_L\to \partial X_{L}$ extends to a diffeomorphism $X_L\to X_{L}$. Consequently $(X_L, \tau_L)$ is not a cork. 
\end{corollary}
Applying the above corollary, we construct infinitely many symmetric links not yielding corks. 

\begin{proof}[Proof of Theorem~\ref{intro:thm:noncork}]
For an integer $n$, let $L_n$ be the link in $S^3$ given by Figure~\ref{fig:non-cork_L_n}, ignoring the dot and $0$. This link clearly consists of two unknotted components. We can easily check the isotopies of the link in Figure~\ref{fig:L_n_symmetry}, and the last diagram gives a symmetric link presentation of $L_n$. It follows from Corollary~\ref{cor:dot-zero:cork} that $(X_{L_n}, \tau_{L_n})$ is not a cork for any $n$. By using the skein relation along the $-n$ full twists of $L_n$, we can check that the HOMFLY-PT polynomials of $L_n$'s are pairwise distinct. Hence $L_n$'s are pairwise distinct links. 
\begin{figure}[h!]
\begin{center}
\includegraphics[width=1.3in]{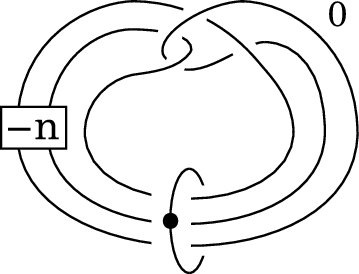}
\caption{$X_{L_n}$}
\label{fig:non-cork_L_n}
\end{center}
\end{figure}
\begin{figure}[h!]
\begin{center}
\includegraphics[width=3.2in]{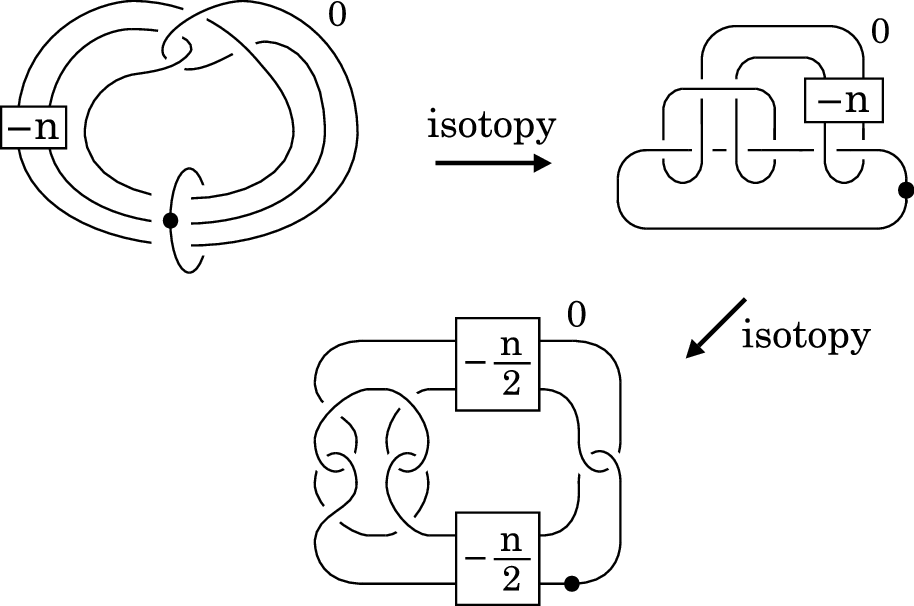}
\caption{Diagrams of $X_{L_n}$}
\label{fig:L_n_symmetry}
\end{center}
\end{figure}

We here alter the handlebody diagram of $X_{L_n}$, see Figure~\ref{fig:L_n_diffeo}. By repeating the handle moves in Figure~2.10 of \cite{A_book}, one can get the second diagram. A simple isotopy gives the third diagram, and the aforementioned handle moves give the last diagram of $X_{L_n}$. Corollary~1.7 in \cite{AKa14} and the last diagram thus tells that the Casson invariant $\lambda(\partial X_{L_n})$ of the boundary 3-manifold is equal to $-2$. Therefore each $X_{L_n}$ is not homeomorphic to $D^4$. Also, it easily follows from the last diagram that $X_{L_n}$ admits a Stein structure for $n\geq 2$. This completes the proof. 
\begin{figure}[h!]
\begin{center}
\includegraphics[width=3.4in]{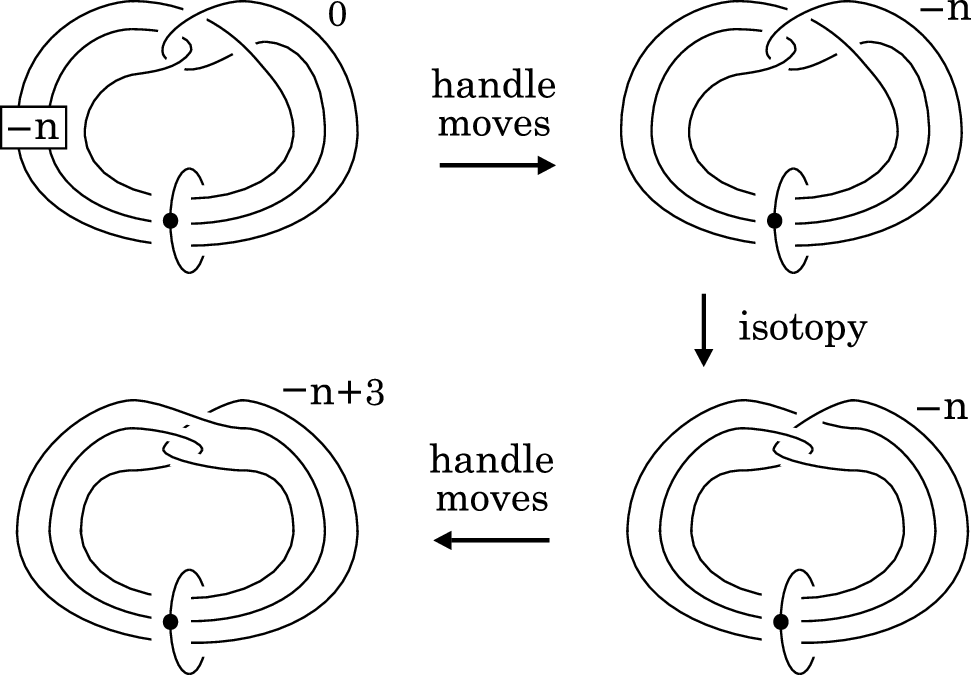}
\caption{Diagrams of $X_{L_n}$}
\label{fig:L_n_diffeo}
\end{center}
\end{figure}
\end{proof}

Using the new description of cork twists, we can easily prove Theorem~\ref{intro:thm:cork_noncork}. 
\begin{proof}[Proof of Theorem~\ref{intro:thm:cork_noncork}]
We use the notation of the above proof of Theorem~\ref{intro:thm:noncork}. One can easily check that the symmetric link $L_3$ is isotopic to the link in the diagram of $V_0^*$ in Figure~\ref{fig:V_n_star} (see also the right diagram of Figure~\ref{fig:pattern}). Therefore $X_{L_3}$ is diffeomorphic to $V_0$. Alternatively this can be directly checked from Figure~\ref{fig:L_n_diffeo}, since the last diagram $(n=3)$ is a diagram of $V_0$. Since $(V_0, g_0)$ is a Stein cork induced from a symmetric link, and $(X_{L_3}, \tau_{L_3})$ is not a cork, the claim follows. 
\end{proof}

\subsection{Hook surgery}
For a 2-component link $L$ in $S^3$ with one unknotted component, let $X_L$ denote the handlebody obtained by putting a dot on the unknotted component and a $0$ on top of the other component as shown in Figure~\ref{fig:hook}. This is a simple extension of the notation in the last subsection, and the other component is not necessarily an unknot in this case. For a knot $K$ in $S^3$, we present $K$ as in Figure~\ref{fig:tangle_smooth} using a tangle $T_K$, and let $\gamma^L_K$ be the knot in $\partial X_{L}$ shown in Figure~\ref{fig:hook_diffeo}. 

\begin{figure}[h!]
\begin{center}
\includegraphics[width=1.2in]{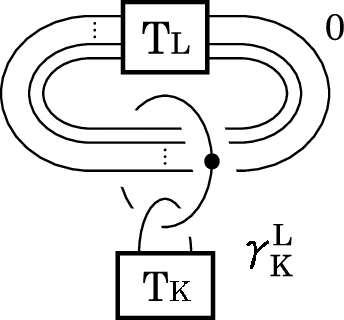}
\caption{The knot $\gamma^L_K$ in $\partial X_{L}$}
\label{fig:hook_diffeo}
\end{center}
\end{figure}

Now let $L_1$ and $L_2$ be two 2-component links with unknotted components. 
Suppose that a diffeomorphism $\psi:\partial X_{L_1}\to \partial X_{L_2}$ satisfies the following conditions. 
\begin{itemize}
 \item $\psi$ maps the knot $\gamma^{L_1}_K$ to $\gamma^{L_2}_K$ for any knot $K$ in $S^3$. 
 \item $\psi$ extends to a homeomorphism $X_{L_1}\to X_{L_2}$.
\end{itemize}
Then we call each $X_{L_i}$ a \textit{hook} and call $(X_{L_1}, X_{L_2}: \psi)$ a \textit{hook pair}. For a smooth 4-manifold $Z$ containing $X_{L_1}$ as a submanifold, remove $X_{L_1}$ from $Z$ and glue $X_{L_2}$ via the gluing map $\psi$. We call this operation a \textit{hook surgery} along $(X_{L_1}, X_{L_2}: \psi)$. As seen from the proof of Theorem~\ref{intro:thm:satellite_knot}, a hook surgery is useful for constructing pairs of (non-concordant) knots and links representing exotic pairs of 4-manifolds. 

\begin{remark}\label{rem:hook}$(1)$ For a hook pair $(X_{L_1}, X_{L_2}: \psi)$, the diffeomorphism $\psi$ preserves the framing of $\gamma^L_{K}$ for any knot $K$. This can be seen by comparing the intersection forms of 4-manifolds $X_{L_i}$ with a 2-handle attached along $\gamma^{L_i}_K$, since the resulting 4-manifolds are homeomorphic.\smallskip\\
$(2)$ Assume that the linking number of the two components of each $L_i$ is one. Then any diffeomorphism $\partial X_{L_1}\to \partial X_{L_2}$ extends to a homeomorphism $X_{L_1}\to X_{L_2}$, since the both 4-manifolds are contractible. Therefore, if a diffeomorphism $\psi:\partial X_{L_1}\to \partial X_{L_2}$ maps the knot $\gamma^{L_1}_K$ to $\gamma^{L_2}_K$ for any knot $K$, then $(X_{L_1}, X_{L_2}: \psi)$ is a hook pair.  
\end{remark}

By Theorem~\ref{thm:boundary_V_n^*}, each $(V_n, V_n^*: g_n^*)$ is an example of a hook pair. Furthermore, Corollary~\ref{cor:new_description} tells that a hook surgery along $(V_n, V_n^*: g_n^*)$ is the same operation as a cork twist along $(V_n,g_n)$. Extending this example, we give a simple sufficient condition that a dot-zero surgery admits a hook surgery description. 

\begin{proposition}\label{prop:hook}
For an ordered link $L$ of two unknotted components in $S^3$, suppose that the component $K_1$ corresponding to the 1-handle of $X_{L}$ links the other component $K_2$ geometrically exactly once up to isotopy after untwisting $n$ full twists of two strands of $K_1$ as shown in Figure~\ref{fig:hook_prop} for an integer $n$. Then there exist an ordered 2-component link $L^*$ in $S^3$ with one unknotted component and a diffeomorphism $\varphi_L^*:\partial X_{L}\to \partial X_{L^*}$ such that the following conditions hold. 
\begin{itemize}
 \item $(X_L, X_{L^*}: \varphi_L^*)$ is a hook pair. 
 \item $\varphi_L^* \circ \varphi_{L}^{-1}:\partial X_{\widetilde{L}}\to \partial X_{L^*}$ extends to a diffeomorphism $X_{\widetilde{L}}\to X_{L^*}$. 
\end{itemize}
\begin{figure}[h!]
\begin{center}
\includegraphics[width=3.0in]{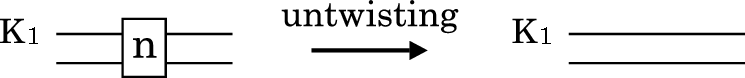}
\caption{Untwisting operation}
\label{fig:hook_prop}
\end{center}
\end{figure}
\end{proposition}

\begin{corollary}Let $L, L^*, \varphi_L^*$ be as above. Then, for any 4-manifold $Z$ containing $X_L$ as a submanifold, the 4-manifold obtained from $Z$ by the dot-zero surgery along $(X_L, \varphi_L)$ is diffeomorphic to the 4-manifold obtained from $Z$ by the hook surgery along $(X_L, X_{L^*}: \varphi_L^*)$. 
\end{corollary}
\begin{proof}[Proof of Proposition~\ref{prop:hook}]  We fix a knot $K$ in $S^3$. The diffeomorphism $\varphi_L:\partial X_L\to \partial X_{\widetilde{L}}$ clearly maps the knot $\gamma^L_K$ to the knot $\delta^L_K$ shown in Figure~\ref{fig:delta_K}. 
\begin{figure}[h!]
\begin{center}
\includegraphics[width=1.2in]{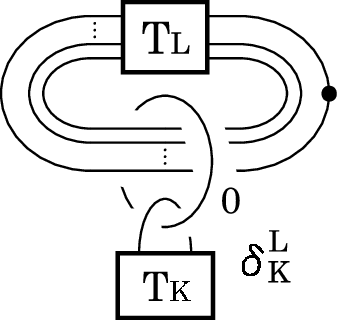}
\caption{The knot $\delta^L_K$ in $\partial X_{\widetilde{L}}$}
\label{fig:delta_K}
\end{center}
\end{figure}

We consider the local part of the handlebody diagram of $X_{\widetilde{L}}$ in the first diagram of Figure~\ref{fig:delta_K_move}, where the $n$ full twists are those in the assumption of the proposition. We introduce a canceling pair of 1- and 2-handles as shown in the second diagram, and let $J_1$ and $J_2$ be the knots corresponding to these 1- and 2-handles in the second diagram. By sliding the knots $\delta^{{L}}_K$ and $K_1$ over the 2-handle $J_2$, we obtain the third diagram. 
\begin{figure}[h!]
\begin{center}
\includegraphics[width=3.9in]{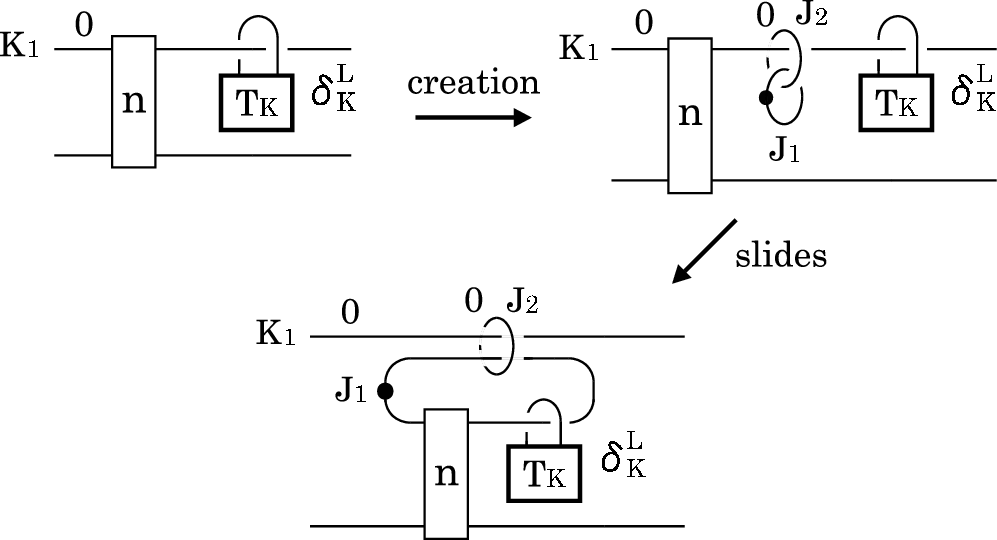}
\caption{Handle moves of $X_{\widetilde{L}}$}
\label{fig:delta_K_move}
\end{center}
\end{figure}

By the assumption of the proposition, we can isotope the resulting link $L\sqcup \delta^{{L}}_K$ so that $K_1$ links $K_2$ geometrically once, and that $J_1$ and $\delta^{{L}}_K$ do not link $K_2$. 
We then slide $J_2$ over the 2-handle $K_1$ so that $J_2$ does not link $K_2$ (and $\delta^{{L}}_K$). We can now eliminate the canceling handle pair $K_1$ and $K_2$ without changing the diffeomorphism type of $X_{\widetilde{L}}$.  

Let $L^*$ be the link in $S^3$ consisting of the resulting knots $J_1$ and $J_2$. Then the resulting handlebody diagram clearly coincides with that of $X_{L^*}$, and thus we obtain a diffeomorphism $\Phi:X_{\widetilde{L}}\to X_{L^*}$ which maps the knot $\delta^L_K$ to $\gamma^{L^*}_K$ on their boundaries. We define a diffeomorphism $\varphi_L^*:\partial X_{L}\to \partial X_{L^*}$ by $\varphi_L^*=\Phi|_{\partial X_{\widetilde{L}}}\circ \varphi_L$. Clearly $\varphi_L^*$ maps the knot $\gamma^{L}_K$ to $\gamma^{L^*}_K$ for any $K$, and the linking numbers of $L$ and $L^*$ are both one. Hence Remark~\ref{rem:hook} tells that $(X_L, X_{L^*}: \varphi_L^*)$ is a hook pair. The definition of $\varphi_L^*$ guarantees that $\varphi_L^*\circ \varphi_{L}^{-1}$ extends to a diffeomorphism $X_{\widetilde{L}}\to X_{L^*}$. This completes the proof. 
\end{proof}

The above proposition provides many corks admitting hook surgery descriptions. A natural question is whether all Mazur type corks admit hook surgery descriptions. We remark that, if the answer to Question~7.6 in \cite{AY1} is negative, then the cork $(W_n, f_n)$ $(n\geq 2)$ obtained in \cite{AY1} admits no hook surgery description. 
\subsection*{Acknowledgements} The application to knot concordance was motivated by Tetsuya Abe's nice talk on the paper \cite{AT} at the workshop ``Differential Topology '15: Past, Present, and Future'' held at Kyoto University. The author is grateful to the organizers for making the enjoyable workshop. He thanks Tetsuya Abe and Masakazu Teragaito for their many helpful comments on the first draft. He also thanks Adam Simon Levine, Arunima Ray, Yuichi Yamada, and a referee for their useful comments. 


\end{document}